\title{Hardness of Finding Combinatorial Shortest Paths on Graph Associahedra} 
\author{Takehiro Ito}{Graduate School of Information Sciences, Tohoku University, Japan}{takehiro@tohoku.ac.jp}{https://orcid.org/0000-0002-9912-6898}{JSPS KAKENHI Grant Numbers JP18H04091, JP19K11814, JP20H05793}
\author{Naonori Kakimura}{Faculty of Science and Technology, Keio University, Japan}{kakimura@math.keio.ac.jp}{https://orcid.org/0000-0002-3918-3479}{JSPS KAKENHI Grant Numbers 	JP20H05795, JP21H03397, JP22H05001}
\author{Naoyuki Kamiyama}{Institute of Mathematics for Industry, Kyushu University, Japan}{kamiyama@imi.kyushu-u.ac.jp}{https://orcid.org/0000-0002-7712-2730}{JSPS KAKENHI Grant Number JP20H05795}
\author{Yusuke Kobayashi}{Research Institute for Mathematical Sciences, Kyoto University, Japan}{yusuke@kurims.kyoto-u.ac.jp}{https://orcid.org/0000-0001-9478-7307}{JSPS KAKENHI Grant Numbers JP20K11692, JP20H05795, JP22H05001}
\author{Shun-ichi Maezawa}{Department of Mathematics, Tokyo University of Science, Japan}{maezawa.mw@gmail.com}{https://orcid.org/0000-0003-1607-8665}{JSPS KAKENHI Grant Numbers JP20H05795, JP22K13956}
\author{Yuta Nozaki}{Faculty of Environment and Information Sciences, Yokohama National University, Japan \and SKCM$^2$, Hiroshima University, Japan}{nozaki-yuta-vn@ynu.ac.jp}{https://orcid.org/0000-0003-3223-0153}{JSPS KAKENHI Grant Numbers JP20H05795, JP20K14317, JP23K12974}
\author{Yoshio Okamoto}{Graduate School of Informatics and Engineering, The University of Electro-Communications, Japan}{okamotoy@uec.ac.jp}{https://orcid.org/0000-0002-9826-7074}{JSPS KAKENHI Grant Numbers JP20H05795, JP20K11670, JP23K10982}
\authorrunning{T. Ito et al.} 
\keywords{Graph associahedra, combinatorial shortest path, NP-hardness, polymatroids} 
\newcommand{\ini}{\mathrm{ini}} 
\newcommand{\tar}{\mathrm{tar}} 
\begin{document}

\maketitle

\begin{abstract}
We prove that the computation of a combinatorial shortest path between two vertices of a graph associahedron, introduced by Carr and Devadoss, is NP-hard.
This resolves an open problem raised by Cardinal.
A graph associahedron is a generalization of the well-known associahedron.
The associahedron is obtained as the graph associahedron of a path.
It is a tantalizing and important open problem in theoretical computer science whether the computation of a combinatorial shortest path between two vertices of the associahedron can be done in polynomial time, which is identical to the computation of the flip distance between two triangulations of a convex polygon, and the rotation distance between two rooted binary trees.
Our result shows that a certain generalized approach to tackling this open problem is not promising.
As a corollary of our theorem, we prove that the computation of a combinatorial shortest path between two vertices of a polymatroid base polytope cannot be done in polynomial time unless $\mathrm{P} = \mathrm{NP}$.
Since a combinatorial shortest path on the matroid base polytope can be computed in polynomial time, our result reveals an unexpected contrast between matroids and polymatroids.
\end{abstract}

\section{Introduction}
Graph associahedra were introduced by Carr and Devadoss \cite{CARR20062155}.
These polytopes generalize associahedra.
In an associahedron, each vertex corresponds to a binary tree over a set of $n$ elements, and each edge corresponds to a rotation operation between two binary trees.
For the historical account of associahedra, see the introduction of the paper by Ceballos, Santos, and Ziegler~\cite{DBLP:journals/combinatorica/CeballosSZ15}.

A binary tree can be obtained from a labeled path.
Let $V=\{1,2,\dots,n\}$ be the set of vertices of the path, and $E=\{\{i,i+1\} \mid 1\leq i\leq n-1\}$ be the set of edges of the path.
To construct a labeled binary tree, we choose an arbitrary vertex from the path.
Let it be $i \in V$.
Then, the removal of $i$ from the path results in at most two connected components: the left subpath and the right subpath, which may be empty.
Then, in the corresponding binary tree, we set $i$ as a root, and append recursively a binary tree of the left subpath as a left subtree and a binary tree of the right subpath as a right subtree.
Note that in this construction, each node of the binary tree is labeled by a vertex of the path.

In the construction of graph associahedra, we follow the same idea.
Since we are only interested in the graph structure of graph associahedra in this work, we only describe their vertices and edges.
To define a graph associahedron, we first fix a connected undirected graph $G = (V, E)$.\footnote{A graph associahedron can also be defined for disconnected graphs, but in this paper, we concentrate on connected graphs.}
Then, in the $G$-associahedron, the vertices correspond to the so-called elimination trees of $G$, and the edges correspond to swap operations between two elimination trees.
The following description follows that of
Cardinal, Merino, and M\"utze~\cite{DBLP:conf/soda/CardinalMM22}.

An \emph{elimination tree} of a connected undirected graph $G = (V, E)$ is a rooted tree defined as follows.
It has $V$ as the vertex set and is composed of a root $v \in V$ that has as children elimination trees for each connected component of $G-v$ (\figurename~\ref{fig:elimtree_example1}).
A swap from an elimination tree $T$ to another elimination tree $T'$ of $G$ is defined as follows.
Let $v$ be a non-root vertex of $T$, and let $u$ be the parent of $v$ in $T$.
Denote by $H$ the subgraph of $G$ induced by the subtree rooted at $v$ in $T$.
Then, the swap of $u$ with $v$ transforms $T$ to $T'$ as follows.
(1) The tree $T'$ has $v$ as the parent of $u$, and $T'$ has $v$ as a child of the parent of $u$ in $T$.
(2) The subtrees rooted at $u$ in $T$ remain subtrees rooted at $u$ in $T'$. 
(3) A subtree $S$ rooted at $v$ in $T$ remains a subtree rooted at $v$ in $T'$, unless the vertices of $S$ belong to the same connected component of $H-v$ as $u$, in which case $S$ becomes a subtree rooted at $u$ in $T'$.
The $G$-associahedron for a claw $G$ is given in \figurename~\ref{fig:stellohedron1}.
Note that a swap operation is reversible.

\begin{figure}[t]
\centering
\includegraphics{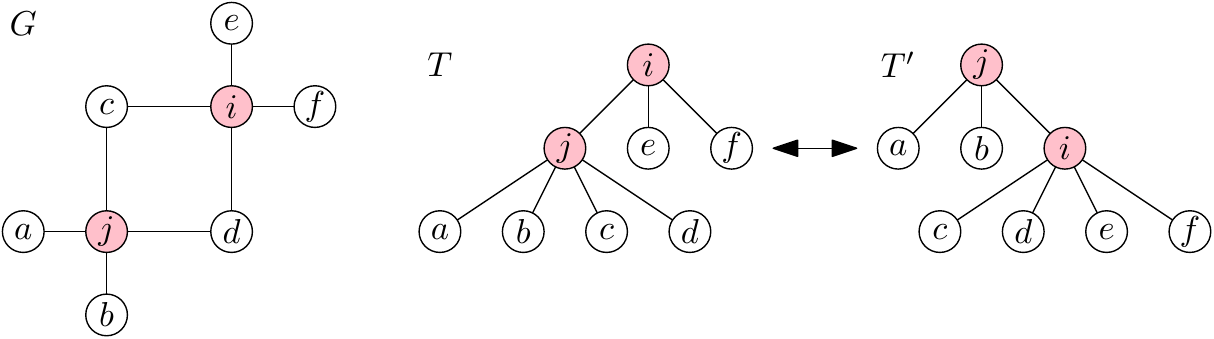}
\caption{An example of elimination trees.
Two trees $T$ and $T'$ are elimination trees of the graph $G$.
The tree $T'$ is obtained from $T$ by the swap of $i$ with $j$.
The example is borrowed from Cardinal, Merino, and M\"utze~\cite{DBLP:conf/soda/CardinalMM22}.}
\label{fig:elimtree_example1}
\end{figure}

\begin{figure}[t]
\centering
\includegraphics{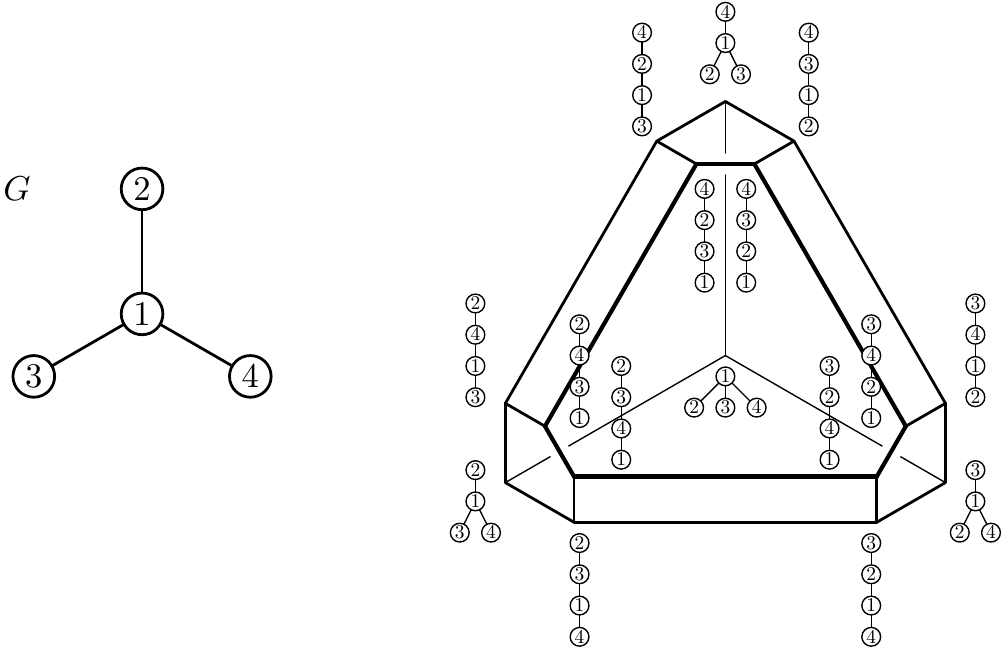}
\caption{An example of a graph associahedron. Each vertex of the polytope corresponds to an elimination tree of the graph $G$.}
\label{fig:stellohedron1}
\end{figure}

In this paper, among graph properties of graph associahedra, we concentrate on the computation of a combinatorial shortest path (i.e., the graph-theoretic distance) between two vertices of the polytope, which we call the combinatorial shortest path problem on graph associahedra.
In this problem, we are given a graph $G$ and two elimination trees $T, T'$ of $G$, and want to compute the shortest length of a graph-theoretic path from $T$ to $T'$ on the $G$-associahedron.
In the literature, we only find the studies in the case where $G$ is a complete graph or (a generalization of) a star.
When $G$ is a complete graph, the $G$-associahedron is called a permutahedron, and each of its vertices corresponds to a permutation.
Since an edge corresponds to an adjacent transposition, the graph-theoretic distance between two vertices is equal to the number of inversions between the corresponding permutations.
This can be computed in polynomial time.
When $G$ is a star, the $G$-associahedron is called stellohedron~\cite{MR2520477}.
Recently, Cardinal, Pournin, and Valencia-Pabon~\cite{https://doi.org/10.48550/arxiv.2211.07984} gave a polynomial-time algorithm to compute a combinatorial shortest path on stellohedra, and they generalize the algorithm when $G$ is a complete split graph (i.e., a graph obtained from a star by replacing the center vertex with a clique).

On the other hand, it is a tantalizing open problem whether a combinatorial shortest path can be computed in polynomial time when $G$ is a path.
In this case, the graph-theoretic distance corresponds to the rotation distance between two binary trees.
By Catalan correspondences, this is equivalent to the flip distance between two triangulations of a convex polygon.
A possible strategy to resolve this open problem is to generalize the problem and solve the generalized problem.
In our case, a generalization is achieved by considering graph associahedra for general graphs.

Our main result states that the combinatorial shortest path problem on $G$-associahedra is $\mathrm{NP}$-hard when $G$ is also given as part of the input.
This implies that the strategy mentioned above is bound to fail.

First, we formally state the problem \textsc{Combinatorial Shortest Path on Graph Associahedra} as follows.
\begin{center}    
\begin{tabular}{p{0.9\textwidth}}
\hline
\textsc{Combinatorial Shortest Path on Graph Associahedra}\\
\hline
Input: A graph $G$ and two elimination trees $T_\ini, T_\tar$ of $G$\\
Output: The distance between $T_\ini$ and $T_\tar$ on the graph of the $G$-associahedron\\
\hline
\end{tabular}
\end{center}

Our first theorem states the $\mathrm{NP}$-hardness of \textsc{Combinatorial Shortest Path on Graph Associahedra}.
This solves an open problem raised by Cardinal (see \cite[Section~4.2]{BuchinLMS22}).
\begin{theorem}
\label{thm:hardness}
\textsc{Combinatorial Shortest Path on Graph Associahedra} is $\mathrm{NP}$-hard. 
\end{theorem}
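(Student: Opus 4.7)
The plan is to establish NP-hardness via a polynomial-time reduction from a well-known NP-hard combinatorial problem (natural candidates include 3-SAT, Vertex Cover, or a reconfiguration-style problem with a built-in counting structure). From an instance of the source problem, the reduction will build a graph $G$ together with two elimination trees $T_\ini$ and $T_\tar$ of $G$ whose combinatorial shortest-path distance in the $G$-associahedron equals a specific affine function of the optimum of the source instance. Deciding the distance then decides the source problem.

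The first step would be the design of $G$ out of gadgets, one per variable, clause, or element of the source instance, glued together by linking edges that enforce the logical or combinatorial constraints. The elimination trees $T_\ini$ and $T_\tar$ would be chosen to agree on most of $G$ but to differ inside each gadget in a controlled way, so that any reconfiguration between them must, inside each gadget, perform a sequence of swaps whose minimum length encodes the local ``choice'' made by the corresponding element of the instance. Because the edges of $G$ constrain which tree edges can appear in an elimination tree, and because each swap affects a parent-child pair, careful shaping of $G$ is what makes the whole scheme rigid enough to transmit combinatorial hardness.

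The second step is to bound the distance between $T_\ini$ and $T_\tar$ from above and below by matching quantities. For the lower bound, the plan is to introduce a potential function on elimination trees whose value differs by a known amount between $T_\ini$ and $T_\tar$ and whose change per swap is bounded above by a constant; a natural candidate is a weighted count of inversions in the ancestor-descendant order of certain distinguished pairs of vertices in $V(G)$, with weights tuned to the gadget structure. For the upper bound, from any optimal solution of the source instance I would explicitly construct a sequence of swaps from $T_\ini$ to $T_\tar$ of matching length, typically by passing through an intermediate elimination tree that encodes the chosen solution and then reaching $T_\tar$ symmetrically.

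The main obstacle will be that a single swap is \emph{not} a purely local operation on the pair $(u,v)$: according to the swap rule, subtrees rooted at the descendant $v$ can be redistributed to the ancestor $u$ depending on how the induced subgraph on the subtree rooted at $v$ decomposes after removing $v$. Consequently, one swap may simultaneously alter many ancestor-descendant relations, which makes any tight potential-based lower bound delicate to design. The gadgets must therefore be engineered so that the effect of every swap inside a gadget on the global potential is completely understood, and so that distinct gadgets interact only through explicit, analyzable channels. Matching the lower bound with the explicit upper bound, up to an instance-independent additive constant, is where the careful choice of gadgets will be decisive, and is where I expect the bulk of the technical work to lie.
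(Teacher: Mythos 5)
Your proposal is a strategy outline rather than a proof: it names candidate source problems (3-SAT, Vertex Cover), promises gadgets, and defers the actual construction and the lower-bound argument to future work. Since the entire difficulty of this theorem lies precisely in those deferred steps, there is a genuine gap --- nothing in the proposal could be checked for correctness, and the obstacle you correctly identify (a single swap can redistribute many subtrees and alter many ancestor--descendant relations at once) is acknowledged but not overcome. In particular, the ``potential function whose change per swap is bounded by a constant'' is unlikely to work as stated: in the actual argument one needs swaps of very different ``importance,'' and a uniform per-swap bound on a single potential cannot distinguish them.

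For comparison, the paper's proof proceeds in two stages, neither of which appears in your plan. First, it introduces a \emph{weighted} variant in which $\mathbf{swap}(u,v)$ costs $w(u)\cdot w(v)$, and reduces \textsc{Balanced Minimum $s$-$t$ Cut} (NP-hard by Feige and Mahdian) to this variant: the graph $G$ is transformed by subdividing every edge, duplicating every vertex, and replacing $s$ and $t$ by large cliques, with a hierarchy of weights ($1$, $N$, $N^4$, $N^8$) chosen so that prohibitively expensive swaps are effectively forbidden, the cheap subdivision vertices that must be lifted to the top of the tree encode a minimum $s$-$t$ cut, and the quadratic cost of reversing the two sides forces the cut to be balanced. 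Second, the weighted problem is reduced to the unweighted one by replacing each vertex of weight $w(v)$ with a clique of size $w(v)$; the correspondence of shortest paths is established via projections of elimination trees onto transversals $U_\phi$ of the cliques together with an averaging argument over all $\prod_v w(v)$ transversals. The weighted intermediate problem and the projection/averaging device are the two ideas that make the non-locality of swaps manageable, and both are absent from your proposal.
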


Theorem \ref{thm:hardness} yields the following corollary, which is related to polymatroids introduced by Edmonds~\cite{E70}.
A pair $(U,\rho)$ of a finite set $U$ and 
a function $\rho \colon 2^U \to \mathbb{R}$ is called 
a \emph{polymatroid} if $\rho$ satisfies 
the following conditions:
(P1) $\rho(\emptyset) = 0$;
(P2) if $X \subseteq Y \subseteq U$, then $\rho(X) \le \rho(Y)$;
(P3) if $X, Y \subseteq U$, then $\rho(X \cup Y) + \rho(X \cap Y) \le \rho(X) + \rho(Y)$.
The function $\rho$ is called the \emph{rank function} of the polymatroid $(U, \rho)$.

For a polymatroid $(U,\rho)$, 
we define the \emph{base polytope} of $(U, \rho)$ as
\[
\mathbf{B}(\rho) := \{x \in \mathbb{R}^U \mid x(X) \le \rho(X) \, (\forall\, X \subseteq U), \, 
x(U) = \rho(U)\},
\]
where we define $x(X) := \sum_{u \in X}x(u)$ for each 
subset $X \subseteq U$. 
Then, $\mathbf{B}(\rho)$ is a polytope because 
$0 \le \rho(U) - \rho(U\setminus \{u\}) = x(U) - \rho(U\setminus \{u\}) \le
x(u) \le \rho(\{u\})$ 
for every element $e \in E$. 

A polymatroid is seen as a polyhedral generalization of a matroid.
For example, the greedy algorithm for matroids can be treated as an algorithm to maximize a linear function over the base polytope of a matroid,\footnote{%
This can further be seen as a generalization of Kruskal's algorithm for the minimum spanning tree problem.}
and the algorithm is readily generalized to the base polytope of a polymatroid.
A lot of combinatorial properties of the base polytopes of matroids also hold for the base polytopes of polymatroids.
Since it is known that a combinatorial shortest path on the base polytope of a matroid can be computed in polynomial time \cite{DBLP:journals/tcs/ItoDHPSUU11}, we are interested in generalizing this result to polymatroids, which leads to the following problem definition.

\begin{center}    
\begin{tabular}{p{0.9\textwidth}}
\hline
\textsc{Combinatorial Shortest Path on Polymatroids}\\
\hline
Input: An oracle access to a polymatroid $(U, \rho)$ and two extreme points $x_\ini, x_\tar$ of the base polytope $\mathbf{B}(\rho)$\\
Output: The distance between $x_\ini$ and $x_\tar$ on $\mathbf{B}(\rho)$\\
\hline
\end{tabular}
\end{center}

We note that a polymatroid $(U, \rho)$ is given as an oracle access that returns the value $\rho(X)$ for any set $X\subseteq U$.
The running time of an algorithm is also measured in terms of the number of oracle calls.
This is a standard assumption when we deal with polymatroids~\cite{F05} since if we would input the function $\rho$ as a table of the values $\rho(X)$ for all $X \subseteq U$, then it would already take at least $2^{|U|}$ space.
We also note that the adjacency of two extreme points of the base polytope of a polymatroid can be tested in polynomial time~\cite{DBLP:journals/mp/Topkis84}.

The next theorem states that this problem is hard, which is proved as a corollary of Theorem \ref{thm:hardness}, and reveals an unexpected contrast between matroids and polymatroids.

\begin{theorem}
\label{thm:hardness-polymatroid}
There exists no polynomial-time algorithm for
\textsc{Combinatorial Shortest Path on Polymatroid}
unless $\mathrm{P} = \mathrm{NP}$.
\end{theorem}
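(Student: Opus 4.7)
The plan is to reduce \textsc{Combinatorial Shortest Path on Graph Associahedra} to \textsc{Combinatorial Shortest Path on Polymatroids} and invoke Theorem~\ref{thm:hardness}. Given an instance $(G, T_\ini, T_\tar)$ with $G = (V, E)$, the first step is to realize the $G$-associahedron as a polymatroid base polytope. Let $\mathcal{B}(G) := \{B \subseteq V : G[B] \text{ is nonempty and connected}\}$ and define
\[
\rho_G(S) := |\{B \in \mathcal{B}(G) : B \cap S \neq \emptyset\}|, \qquad S \subseteq V.
\]
Conditions (P1) and (P2) are immediate, and for (P3) a case analysis on how each $B \in \mathcal{B}(G)$ meets $S$ and $T$ shows that $B$'s contribution to $\rho_G(S \cup T) + \rho_G(S \cap T)$ never exceeds its contribution to $\rho_G(S) + \rho_G(T)$. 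Hence $\rho_G$ is a polymatroid rank function.

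The second step is to identify $\mathbf{B}(\rho_G)$ with the $G$-associahedron. Comparing support functions yields the identity $\mathbf{B}(\rho_G) = \sum_{B \in \mathcal{B}(G)} \Delta_B$, the nestohedral realization of the $G$-associahedron. From the greedy extreme-point procedure for polymatroid base polytopes, the extreme points of $\mathbf{B}(\rho_G)$ are in bijection with the elimination trees $T$ of $G$ via
\[
x_T(v) = |\{B \in \mathcal{B}(G) : v \in B \subseteq V(T_v)\}|,
\]
where $T_v$ denotes the subtree of $T$ rooted at $v$. Since $\mathbf{B}(\rho_G)$ coincides as a polytope with the $G$-associahedron, the two $1$-skeletons are identical as graphs; in particular, the distance between $x_{T_\ini}$ and $x_{T_\tar}$ on $\mathbf{B}(\rho_G)$ equals the distance between $T_\ini$ and $T_\tar$ on the $G$-associahedron.

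The main obstacle is supplying a polynomial-time oracle for $\rho_G$. Writing $\rho_G(S) = |\mathcal{B}(G)| - |\mathcal{B}(G[V \setminus S])|$ shows that evaluating $\rho_G$ is equivalent to counting connected induced subgraphs, which is $\#\mathrm{P}$-hard for general graphs. To get around this, I would inspect the reduction used in the proof of Theorem~\ref{thm:hardness} and verify (or, if needed, arrange by refining the reduction) that the graphs $G$ produced there lie in a restricted class on which connected induced subgraphs admit polynomial-time counting, for instance graphs of bounded treewidth via tree-decomposition dynamic programming. With a polynomial-time oracle for $\rho_G$ in hand, both extreme points $x_{T_\ini}$ and $x_{T_\tar}$ are computed in polynomial time by applying the greedy procedure to any linear extension of the given elimination trees. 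Any polynomial-time algorithm for \textsc{Combinatorial Shortest Path on Polymatroids} would then solve \textsc{Combinatorial Shortest Path on Graph Associahedra} in polynomial time, contradicting Theorem~\ref{thm:hardness} unless $\mathrm{P} = \mathrm{NP}$.
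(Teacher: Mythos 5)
Your overall strategy is the right one, and it matches the paper's: realize the $G$-associahedron as a polymatroid base polytope, map the two elimination trees to the corresponding extreme points, and invoke Theorem~\ref{thm:hardness}. Your first two steps are mathematically sound — the function $\rho_G$ counting tubes meeting $S$ is indeed a polymatroid rank function, and $\mathbf{B}(\rho_G)=\sum_B\Delta_B$ is Postnikov's realization of the $G$-associahedron, so the $1$-skeletons agree. You also correctly identified the crux of the whole reduction: the rank function must be evaluable in polynomial time, since the polymatroid is given by an oracle and the oracle must be simulated by the reduction.

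However, your proposed resolution of that crux is a genuine gap. Evaluating $\rho_G$ amounts to counting connected induced subgraphs, which as you note is $\#\mathrm{P}$-hard, and the escape hatch of restricting to the instances produced by the Theorem~\ref{thm:hardness} reduction does not work: those graphs contain cliques of size $N^3$, $N^4$, and $N^8$ with $N=10n^3m$ (from the balanced-cut gadget and from the weight-to-clique blow-up), so their treewidth is polynomially large rather than bounded, and they additionally contain a subdivision of an \emph{arbitrary} input graph $G$. No bounded-treewidth dynamic program applies, and there is no reason to expect connected-induced-subgraph counting to be tractable on this class; ``arrange by refining the reduction'' is precisely the step that is missing and is not obviously achievable. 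The paper sidesteps this entirely by choosing a different realization, due to Devadoss: the rank function is $f(X)=3^{n-2}-\sum_{C\in\mathcal{C}^{\ast}(X)}3^{|C|-2}$, where $\mathcal{C}^{\ast}(X)$ ranges over the non-trivial connected components of $G-X$. Evaluating this only requires computing connected components, so it is polynomial for \emph{every} graph; the price is that submodularity is no longer a triviality and must be verified by hand (via an elementary inequality on powers of $3$), and one must separately check that the extreme point generated by any linear extension of $T$ equals Devadoss's vertex $x^T$. To repair your proof you would need to replace your coverage function by such an explicitly computable rank function (or prove Theorem~\ref{thm:hardness} for a graph class on which connected-subgraph counting is polynomial, which the paper does not do).
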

Our proof relies on the fact that graph associahedra can be realized as the base polytopes of some polymatroids~\cite{MR2520477}.
However, we need careful treatment since in the reduction we require the rank function of our polymatroid to be evaluated in polynomial time.
To this end, we give an explicit inequality description of the realization of a graph associahedron due to Devadoss~\cite{DBLP:journals/dm/Devadoss09},\footnote{%
We note here that the original definition of a graph associahedron by Carr and Devadoss~\cite{CARR20062155} does not give explicit vertex coordinates of the polytope. Therefore, we rely on the realization by Devadoss~\cite{DBLP:journals/dm/Devadoss09} who gave the explicit vertex coordinates.
}
which is indeed the base polytope of a polymatroid.

\subsection*{Related Work}

Paths on polytopes have been studied thoroughly.
One of the initial motivations for this research direction is to design and understand path-following algorithms for linear optimization such as simplex methods.
In his chapter of \textit{Handbook of Discrete and Computational Geometry}~\cite{DBLP:reference/cg/2017}, Kalai stated as an open problem ``Find an efficient routing algorithm for convex polytopes.''
Here, a routing algorithm means one that finds a short path from a given initial vertex to a given target vertex.

Paths on graph associahedra have been receiving much attention.
The diameter is perhaps the most frequently studied quantity, which is defined as the maximum distance between two vertices of the polytope.
A famous result by Sleator, Tarjan, and Thurston \cite{STT-jams} states that the diameter of the $(n-1)$-dimensional associahedron (i.e., a graph associahedron of an $n$-vertex path) is at most $2n-6$ when $n\geq 11$ and this bound is tight for all sufficiently large $n$.
This bound is refined by Pournin~\cite{POURNIN201413}, who proved that the diameter of the $(n-1)$-dimensional associahedron is exactly $2n-6$ when $n \geq 11$.

For a general $n$-vertex graph $G$, Manneville and Pilaud~\cite{manneville:hal-01276806} proved that the diameter of $G$-associahedron is at most $\binom{n}{2}$ and at least $\max\{m,2n-20\}$, when 
$m$ is the number of edges of $G$.
The upper bound is attained by the case where $G$ is a complete graph (i.e., the $G$-associahedron is a permutahedron).
When $G$ is an $n$-vertex star (i.e., $K_{1,n-1}$), $n\geq 6$, Manneville and Pilaud~\cite{manneville:hal-01276806} showed that the diameter is $2n-2$.
When $G$ is a cycle (i.e., the polytope is a cyclohedron), 
Pournin \cite{POURNIN-cyclohedra} gave the asymptotically exact diameter.
When $G$ is a tree, Manneville and Pilaud~\cite{manneville:hal-01276806} gave the upper bound $O(n \log n)$ while 
Cardinal, Langerman, and P{\'{e}}rez{-}Lantero \cite{DBLP:journals/combinatorics/CardinalLP18} gave an example of trees for which the diameter is $\Omega(n \log n)$ (such an example is chosen as a complete binary tree).
Cardinal, Pournin, and Valencia-Pabon~\cite{defga-CardinalPVP} proved that the diameter is $\Theta(m)$ for $m$-edge trivially perfect graphs, and gave upper and lower bounds for the diameter in terms of treewidths, pathwidths, and treedepths of graphs.
Berendsohn~\cite{DBLP:conf/swat/Berendsohn22} proved that the diameter is $\Theta(n + m H)$ for a caterpillar with $n$ spine vertices, $m$ leg vertices, and the Shannon entropy $H$ of the so-called leg distribution.

To the authors' knowledge, 
the complexity of computing the diameter of graph associahedra has not been investigated.
When polytopes are not restricted to graph associahedra, a few hardness results have been known.
Frieze and Teng~\cite{DBLP:journals/cc/FriezeT94} proved that computing the diameter of a polytope, given by inequalities, is weakly NP-hard.
Sanit\`{a} \cite{DBLP:conf/focs/Sanita18} proved that computing the diameter of the fractional matching polytope of a given graph is strongly NP-hard.
Kaibel and Pfetsch \cite{DBLP:conf/dagstuhl/KaibelP03} raised an open problem about the complexity of computing the diameter of simple polytopes.

The computation of a combinatorial shortest path on a $G$-associahedron has also been studied.
It is a long-standing open problem whether a combinatorial shortest path in an associahedron (i.e., a $G$-associahedron when $G$ is a path) can be computed in polynomial time.
Polynomial-time algorithms are only known when $G$ is a complete graph (folklore), a star, or a complete split graph (Cardinal, Pournin, and Valencia-Pabon~\cite{https://doi.org/10.48550/arxiv.2211.07984}).
When $G$ is a path, a polynomial-time approximation algorithm of factor two~\cite{DBLP:journals/jgaa/ClearyJ10} and fixed-parameter algorithms when the distance is a parameter~\cite{DBLP:journals/ipl/ClearyJ09,DBLP:journals/dcg/KanjSX17,DBLP:conf/stacs/LiX23,DBLP:journals/ipl/Lucas10} are known.

Since a combinatorial shortest path on an associahedron is equivalent to a shortest flip sequence of triangulations of a convex polygon, the computation of a shortest flip sequence of triangulations has been studied in more general setups.
For triangulations of point sets, the problem is $\mathrm{NP}$-hard~\cite{DBLP:journals/comgeo/LubiwP15} and even $\mathrm{APX}$-hard~\cite{DBLP:journals/comgeo/Pilz14}.
For triangulations of simple polygons, the problem is also $\mathrm{NP}$-hard~\cite{DBLP:journals/dcg/AichholzerMP15}.

Elimination trees have appeared in a lot of branches of mathematics and computer science.
For a good summary, see Cardinal, Merino, and M\"utze~\cite{DBLP:conf/soda/CardinalMM22}.

\subsection*{Technique Overview}

To prove the hardness of the combinatorial shortest path problem on graph associahedra, 
we first consider a ``weighted'' version of the combinatorial shortest path problem on graph associahedra, which is newly introduced in this paper for our reduction.
In this problem, each vertex of a given graph has a positive weight, and the swap of two vertices incurs the weight that is defined as the product of the weights of these two vertices.
The weight of a swap sequence is defined as the sum of weights of swaps in the sequence.
As our intermediate theorem, we prove that this weighted version is strongly $\mathrm{NP}$-hard.

To this end, we reduce the $\mathrm{NP}$-hard problem of finding a balanced minimum $s$-$t$ cut in a graph~\cite{DBLP:conf/stoc/FeigeM06} to the weighted version of the combinatorial shortest path problem on graph associahedra.
In the balanced minimum $s$-$t$ cut problem, we want to determine whether there exists a minimum $s$-$t$ cut of a given graph $G$ that is a bisection of the vertex set.
In the reduction, we construct a vertex-weighted graph $H$ from $G$ and two elimination trees $T_\ini$, $T_\tar$ of $H$.
The weighted graph $H$ is constructed by replacing $s$ and $t$ by large cliques,
subdividing each edge, and duplicating each vertex; the weights are assigned so that the subdivision vertices receive small weights, and duplicated vertices and vertices in large cliques receive large weights.
Elimination trees $T_\ini$ and $T_\tar$ are constructed so that swapping two vertices of large weights is forbidden and identifying a few vertices of small weights (that corresponds to a balanced minimum $s$-$t$ cut of $G$) gives a shortest path.

In the second step, we reduce the weighted version to the original unweighted version of the problem.
To this end, a vertex of weight $w$ is replicated by a clique of size $w$.
We want to make sure that a swap of the vertices $u, v$ of weights $w(u), w(v)$, respectively in the weighted instance is mapped to consecutive $w(u)\cdot w(v)$ swaps of the vertices of cliques that correspond to $u$ and $v$ in the constructed unweighted instance.
This is proved via the useful operation of projections combined with the averaging argument.

\section{Preliminaries}
For a positive integer $k$, 
let $[k]$ denote $\{1,2,\dots,k\}$.

For a graph $G=(V, E)$ and two elimination trees $T_{\ini}$ and $T_{\tar}$ of $G$, 
we say that a sequence $\mathbf{T} = \langle T_0, T_1, \dots , T_\ell \rangle$ of elimination trees of $G$ is 
a \emph{reconfiguration sequence from $T_{\ini}$ to $T_{\tar}$} if $T_0=T_{\ini}$, $T_{\ell} = T_{\tar}$, and $T_i$ is obtained from $T_{i-1}$ by applying a single swap operation for $i \in [\ell]$. 
We sometimes regard $\mathbf{T}$ as a sequence of swap operations if no confusion may arise. 
The length of $\mathbf{T}$ is defined as the number $\ell$ of swaps in $\mathbf{T}$, which we denote $\mathrm{length}(\mathbf{T})$. 
Then, \textsc{Combinatorial Shortest Path on Graph Associahedra}
is the problem of finding a reconfiguration sequence $\mathbf{T}$ from $T_{\ini}$ to $T_{\tar}$ that minimizes $\mathrm{length} (\mathbf{T})$. 

When $u \in V$ is a child of $v \in V$ in an elimination tree $T$, an operation swapping $u$ and $v$ is represented by $\mathbf{swap} (u, v)$. 
Note that we distinguish $\mathbf{swap} (u, v)$ and $\mathbf{swap} (v, u)$.
For an elimination tree
$T$ and for a vertex $v \in V(T)$, let $\mathbf{anc}_T(v)$ (resp.~$\mathbf{des}_T(v)$) denote the set of all ancestors (descendants) of $v$ in $T$. 
Note that $u \in \mathbf{anc}_T(v)$ if and only if $v \in \mathbf{des}_T(u)$. 
Note also that neither $\mathbf{anc}_T(v)$ nor $\mathbf{des}_T(v)$ contains $v$. 
We say that distinct vertices $u$ and $v$ are \emph{comparable} in $T$ if $u \in \mathbf{anc}_T(v)$ or $v \in \mathbf{anc}_T(u)$. 
Otherwise, they are called \emph{incomparable} in $T$. 
A linear ordering $\prec$ on $V$ defines an elimination tree $T$ uniquely so that 
$u \in \mathbf{anc}_T(v)$ implies $u \prec v$. 

Let $G=(V, E)$ be an undirected graph. 
For $X \subseteq V$, let $\delta_G(X)$ denote the set of edges between $X$ and $V \setminus X$. 
For $s, t \in V$, we say that $X \subseteq V$ is an \emph{$s$-$t$ cut} if $s \in X$ and $t \not\in X$. An edge set $F \subseteq E$ is called an \emph{$s$-$t$ cut set} if $F = \delta_G(X)$ for some $s$-$t$ cut $X \subseteq V$. A \emph{minimum $s$-$t$ cut} is an $s$-$t$ cut $X$ minimizing $|\delta_G(X)|$.
For $X \subseteq V$, let $G[X]$ denote the subgraph induced by $X$ and 
let $E[X]$ denote its edge set.

\section{Hardness of the Weighted Problem}
\label{sec:hardnessweighted}

We consider a weighted variant of \textsc{Combinatorial Shortest Path on Graph Associahedra}, 
which we call \textsc{Weighted Combinatorial Shortest Path on Graph Associahedra}. 
In the problem, we are given a graph $G=(V, E)$, two elimination trees $T_{\ini}$ and $T_{\tar}$, 
and a weight function $w \colon V \to \mathbb{Z}_{>0}$. 
For $u, v \in V$, the \emph{weight} of $\mathbf{swap} (u, v)$ is defined as $w(u) \cdot w(v)$. 
This value is sometimes denoted by $w (\mathbf{swap} (u, v))$. 
The \emph{weighted length} (or simply the \emph{weight}) of a reconfiguration sequence $\mathbf{T}$ is defined as 
the total weight of swaps in $\mathbf{T}$, which we denote by $\mathrm{length}_w (\mathbf{T})$.
The objective of  \textsc{Weighted Combinatorial Shortest Path on Graph Associahedra}
is to find a reconfiguration sequence $\mathbf{T}$ from $T_{\ini}$ to $T_{\tar}$ that minimizes $\mathrm{length}_w (\mathbf{T})$. 

In this section, we show that the weighted variant is strongly $\mathrm{NP}$-hard. 

\begin{theorem}
\label{thm:hardnessweighted}
\textsc{Weighted Combinatorial Shortest Path on Graph Associahedra} is strongly $\mathrm{NP}$-hard, that is, 
it is $\mathrm{NP}$-hard even when the input size is $\sum_{v \in V} w(v)$. 
\end{theorem}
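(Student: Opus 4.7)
The plan is to reduce from \textsc{Balanced Minimum $s$-$t$ Cut}, proved NP-hard by Feige and Mahdian~\cite{DBLP:conf/stoc/FeigeM06}: given a graph $G=(V,E)$ with distinguished $s,t\in V$ and an integer $k$, decide whether $G$ has an $s$-$t$ cut $X$ with $|X|=|V|/2$ and $|\delta_G(X)|\le k$. From such an instance I would construct a vertex-weighted graph $H$ with weights bounded polynomially in $|V|+|E|$, together with two elimination trees $T_\ini,T_\tar$ of $H$ and a threshold $B$, so that $H$ admits a reconfiguration sequence from $T_\ini$ to $T_\tar$ of weighted length at most $B$ if and only if the balanced-cut instance is a yes-instance. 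Since the weights will be polynomial, this immediately gives strong NP-hardness.

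Following the technique overview in the introduction, I would blow up $s$ and $t$ into two large cliques $K_s,K_t$ of common size $N$, subdivide each edge $\{u,v\}\in E$ by a new \emph{edge vertex} $x_{uv}$, and attach a small duplicate gadget to each remaining vertex of $V$. Edge vertices would receive weight $1$ (``light''), while every other vertex would receive weight $N$ (``heavy''), with $N$ polynomial in $|V|$ and large enough that a single heavy-heavy swap (cost $N^2$) exceeds the savings achievable by any rerouting. I would design $T_\ini$ so that $K_s$ together with the ``$s$-side'' heavy vertices are ancestors of the edge vertices, and $T_\tar$ symmetrically so that $K_t$ and the ``$t$-side'' heavy vertices are ancestors of the edge vertices. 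Under such a setup, going from $T_\ini$ to $T_\tar$ forces each edge vertex $x_{uv}$ to migrate across the heavy part of the tree, and the side it chooses to traverse will define an $s$-$t$ cut $X$ in the original graph $G$.

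The core of the proof would be a structural analysis of optimal reconfiguration sequences. First, I would show that no optimal sequence performs a heavy-heavy swap, so edge vertices are the only ``movers.'' Second, accounting the light-heavy swaps should yield that the total weighted length is
$N\cdot\bigl(\alpha\cdot|\delta_G(X)| + \beta\cdot\lvert|X|-|V|/2\rvert\bigr) + C$
for constants $\alpha,\beta,C>0$ determined by the gadgets, with the minimum attained exactly when $X$ is both a minimum and a balanced $s$-$t$ cut of $G$. Setting $B$ just above the value corresponding to a balanced min cut of size $k$ then completes the reduction.

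The main obstacle will be the first step: the swap rules permit swaps inside $K_s$, inside $K_t$, and inside duplicate gadgets, and I would need an exchange argument showing that every heavy-heavy swap in a candidate optimum can be eliminated without increasing the total weight. I plan to achieve this by arranging $T_\ini$ and $T_\tar$ so that each heavy block is ordered identically in both trees, and then invoking a projection-style argument (in the spirit of the tools developed later in the paper) to restrict attention to sequences that are ``heavy-monotone.'' A secondary delicate point is calibrating the duplicate gadgets to produce the nonzero imbalance coefficient $\beta$: without duplicates, the cost would only detect $|\delta_G(X)|$ and not balancedness, so the gadget sizes must charge exactly $N\cdot\beta$ per unit of imbalance, while remaining small enough that all weights stay polynomial.
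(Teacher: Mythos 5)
Your skeleton matches the paper's: reduce from \textsc{Balanced Minimum $s$-$t$ Cut}, subdivide edges so that the light subdivision vertices $u_e$ are the only vertices cheap enough to form a separator, blow $s$ and $t$ up into cliques, and add very heavy duplicates whose real job (in the paper) is to keep the graph connected when original vertices are deleted, so that any separator between the two cliques must consist of $u_e$'s. But two core mechanisms are missing, and one of your explicit design decisions would break the reduction. First, forcing the sequence to commit to a \emph{single} minimum cut requires amplification: the paper interleaves $N^3$ pairs $s_1\prec t_1\prec\dots\prec s_{N^3}\prec t_{N^3}$ in $T_\ini$ and reverses each pair in $T_\tar$. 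Each pair must be made incomparable, so a separator $L_i$ (resp.\ $R_i$) of subdivision vertices must be lifted above $s_i$ (resp.\ $t_i$); the budget then forces $|L_i|=|R_i|=\lambda$ for every $i$ \emph{and} forces all these separators to coincide, because using different minimum cuts on the $s$-side and $t$-side would cost at least one extra $u_e$--$u_{e'}$ swap per pair, i.e.\ $N^3$ extra weight (Claims 10--11). With one clique pair and one ``migration,'' nothing prevents the sequence from mixing several minimum cuts, and you cannot extract a single $X$ with $F=\delta_G(X)$. (Relatedly, the source problem must be ``is some \emph{minimum} cut balanced,'' i.e.\ $k=\lambda$; the sequence always prefers a minimum cut, so a threshold $k>\lambda$ is undetectable.)

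Second, and more seriously, your plan to prove that ``no optimal sequence performs a heavy-heavy swap, so edge vertices are the only movers'' would erase exactly the signal that detects balancedness. In the paper the balance term \emph{is} a collection of heavy-heavy swaps: $T_\ini$ and $T_\tar$ reverse the order of $v_1,\dots,v_{2n}$; any two of them lying on the same side of the cut stay comparable throughout the sequence (Claim 14) and hence must be swapped directly, contributing $\bigl(\binom{|X|-1}{2}+\binom{|V\setminus X|-1}{2}\bigr)N^2$ --- a quadratic penalty minimized exactly at the bisection, not the linear penalty $\beta\,\bigl|\,|X|-|V|/2\,\bigr|$ you posit. The duplicates cannot supply this charge: they are given weight $N^8$ precisely so that they never move. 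You correctly flag that without some extra device the cost ``would only detect $|\delta_G(X)|$,'' but you provide no mechanism for the imbalance charge, so as written the construction would accept any minimum $s$-$t$ cut, balanced or not.
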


\subsection{Reduction}

To show Theorem~\ref{thm:hardnessweighted}, we reduce \textsc{Balanced Minimum $s$-$t$ Cut} to \textsc{Weighted Combinatorial Shortest Path on Graph Associahedra}. 
In \textsc{Balanced Minimum $s$-$t$ Cut}, the input consists of a connected graph $G=(V, E)$ with $s, t \in V$, 
and the objective is to determine whether $G$ contains a minimum $s$-$t$ cut $X$ with $|X| = |V \setminus X|$.
Without loss of generality, we may assume that $|V|$ is even. 
Let $V = \{s, t, v_1, v_2, \dots , v_{2n} \}$ and $E = \{e_1, \dots , e_m\}$, where $|V| = 2n+2$ and $|E| = m$.  
It is known that \textsc{Balanced Minimum $s$-$t$ Cut} is $\mathrm{NP}$-hard~\cite{DBLP:conf/stoc/FeigeM06}.
For an instance of \textsc{Balanced Minimum $s$-$t$ Cut}, we construct an instance of \textsc{Weighted Combinatorial Shortest Path on Graph Associahedra} as follows. 

Let $N$ be a sufficiently large integer (e.g., $N = 10 n^3 m$). 
We first subdivide each edge $e \in E$ by introducing a new vertex $u_e$. 
Then, for each $v \in V$, we introduce a copy $v'$ of $v$. 
We replace $s$ with a clique $\{s_1, \dots , s_{N^3} \}$ of size $N^3$ and replace $t$ with another clique $\{t_1, \dots , t_{N^3} \}$ of size $N^3$. 
Let $H$ be the obtained graph. 
Formally, the graph $H = (V(H), E(H))$ is defined as follows:  
\begin{align*}
V(H) &= (V \setminus \{s, t\}) \cup  \{v' \mid v \in V \} \cup \{u_e \mid e \in E\} \cup  \{s_1, \dots , s_{N^3} \} \cup \{t_1, \dots , t_{N^3} \}, \\
E(H) &= \{ \{v, u_e\} \mid v \in V \setminus \{s, t\},\ e \in \delta_G(v)\} \cup \{ \{v', u_e\} \mid v \in V,\ e \in \delta_G(v)\} \\ 
	 & \quad \cup \{ \{s_i, s_j\} \mid i, j \in [N^3],\ i \neq j \} \cup \{ \{t_i, t_j\} \mid i, j \in [N^3],\ i \neq j \} \\ 
	 & \quad \cup \{ \{s_i, u_e\} \mid i \in [N^3],\ e \in \delta_G(s) \} \cup \{ \{t_i, u_e\} \mid i \in [N^3],\ e \in \delta_G(t) \} .  
\end{align*}
Define $w \colon V(H) \to \mathbb{Z}_{>0}$ as follows: 
\begin{alignat*}{2}
&w(v) =  N  &\qquad&(v \in V \setminus \{s, t\}),  \\
&w(v') =  N^8  &\qquad&(v \in V),  \\
&w(u_e) = 1   &\qquad&(e \in E), \\
&w(s_i) = w(t_i) = N^4  &\qquad&(i \in [N^3]). 
\end{alignat*}
The initial elimination tree $T_{\ini}$ is defined by the following linear ordering:  
\begin{align*}
&v_1 \prec \dots \prec v_{2n}  \prec s_1 \prec t_1 \prec s_2 \prec t_2 \prec \dots \prec s_{N^3} \prec t_{N^3}  \\
& \qquad  \prec u_{e_1} \prec \dots \prec u_{e_m} \prec v'_1 \prec \dots \prec v'_{2n} \prec s' \prec t'.  
\end{align*}
Note that, in $T_{\ini}$,  the vertices
$v_1, \dots , v_{2n}, s_1, t_1, s_2, t_2, \dots , s_{N^3}, t_{N^3}$ are aligned on a path, while the other elements are not necessarily aligned sequentially.  
The target elimination tree $T_{\tar}$ is the elimination tree defined by the following linear ordering:  
\begin{align*}
&v_{2n} \prec \dots \prec v_{1}  \prec t_1 \prec s_1 \prec t_2 \prec s_2 \prec \dots \prec t_{N^3} \prec s_{N^3}  \\
&\qquad  \prec u_{e_1} \prec \dots \prec u_{e_m} \prec v'_1 \prec \dots \prec v'_{2n} \prec s' \prec t'.  
\end{align*}
We consider an instance $(H, w, T_{\ini}, T_{\tar})$ of \textsc{Weighted Combinatorial Shortest Path on Graph Associahedra}. 
In this instance, we reverse the ordering of the first $2n$ elements and reverse the ordering of $s_i$ and $t_i$ for each $i$. 
See \figurename~\ref{fig:reduction1} for an illustration.

\begin{figure}[t]
\centering
\includegraphics{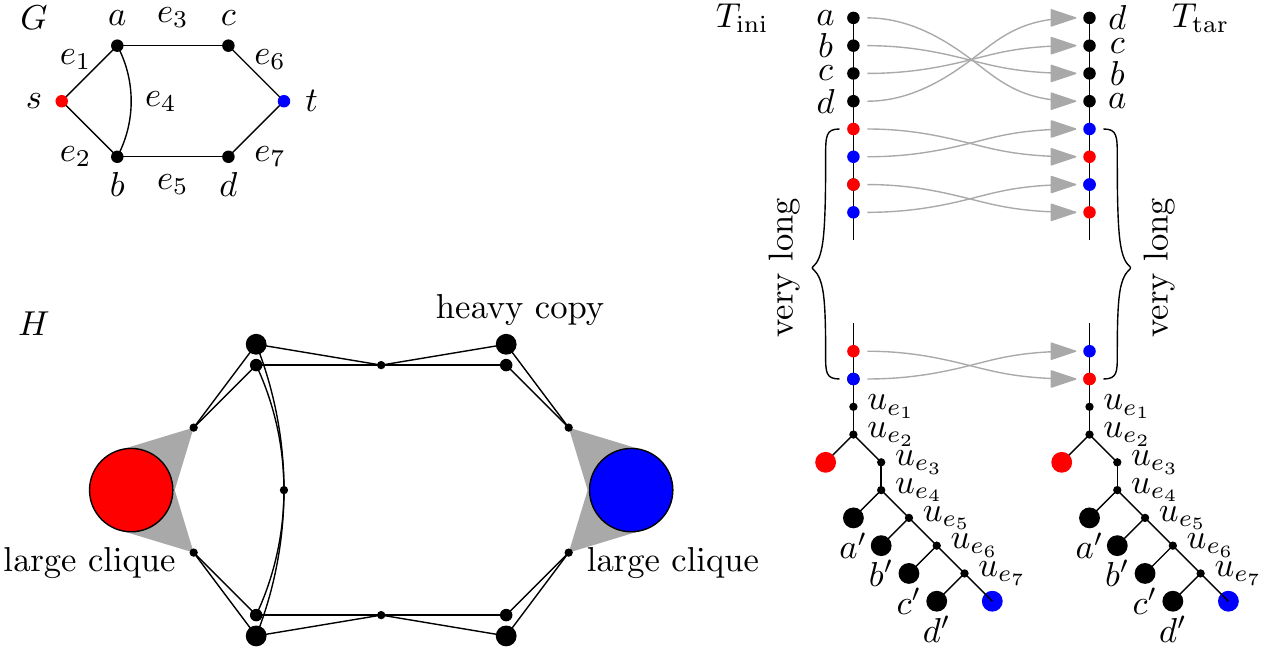}
\caption{Reduction for Theorem \ref{thm:hardnessweighted}.}
\label{fig:reduction1}
\end{figure}

To prove Theorem~\ref{thm:hardness}, it suffices to show the following proposition. 

\begin{proposition}
\label{prop:01}
Let $\lambda$ be the minimum size of an $s$-$t$ cut set in $G$. 
There is a reconfiguration sequence from  $T_{\ini}$ to $T_{\tar}$ of weight less than $4 \lambda N^7 + (n^2 - n +1)N^2$
if and only if 
$G$ has a minimum $s$-$t$ cut $X$ with $|X| = |V \setminus X|$. 
\end{proposition}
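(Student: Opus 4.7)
The plan is to exploit the weight hierarchy in the reduction: $w(u_e) = 1$, $w(v_k) = N$, $w(s_i) = w(t_i) = N^4$, and the primed vertices $v'_k, s', t'$ carry weight $N^8$. For $N = 10 n^3 m$, the budget $4\lambda N^7 + (n^2 - n + 1)N^2$ is tight: a single swap between two clique vertices costs $N^8$, and any swap involving a primed vertex costs at least $N^9$. An economical reconfiguration must therefore avoid such moves entirely and route the required reorderings through the cheap subdivision vertices $u_e$.

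For the ``if'' direction, assume $G$ has a balanced minimum $s$-$t$ cut $X$ with $|\delta_G(X)| = \lambda$, and construct an explicit sequence in four phases. (1)~Lift the $\lambda$ subdivision vertices $u_e$ for $e \in \delta_G(X)$ above the $s$- and $t$-cliques. Because $u_e$ has weight $1$, each lift costs at most $2N^3 \cdot N^4 = 2N^7$, summing to $2\lambda N^7$ over all cut edges. (2)~Use the lifted bridges to swap each pair $(s_i, t_i) \to (t_i, s_i)$; every swap in this phase either involves a bridge (cost at most $N^4$) or contributes a lower-order term absorbed into the budget. (3)~Reverse $v_1, \dots, v_{2n}$ using the balanced partition induced by $X$ together with the lifted bridges as intermediaries; this can be completed in at most $n^2 - n + 1$ swaps of weight $N^2$. (4)~Lower the bridges back to their original positions at symmetric cost $\approx 2\lambda N^7$. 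The aggregate weight falls strictly below the budget.

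For the ``only if'' direction, let $\mathbf{T}$ be a reconfiguration sequence of weight below the budget. First argue that no swap in $\mathbf{T}$ touches a primed vertex, since a single such swap costs at least $N^9$; hence the primed vertices stay fixed throughout. Next, because the $s$-clique and the $t$-clique share no edge in $H$, every rearrangement between them must route through some subdivision vertex $u_e$ temporarily lifted above both cliques. The set of used bridges forms an $s$-$t$ cut of $G$, and by minimality of $\lambda$ at least $\lambda$ of them must be lifted. Each used bridge contributes at least $4N^7$ from its up-and-down travel across the two $N^3$-sized cliques, yielding the $4\lambda N^7$ term. The residual $(n^2 - n + 1)N^2$ must then cover the reversal of $v_1, \dots, v_{2n}$; a permutation-counting argument shows that this budget suffices only when the cut partitions $\{v_1, \dots, v_{2n}\}$ evenly into two halves of size $n$, forcing $|X| = |V \setminus X|$.

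The main obstacle is the tight constant accounting. The coefficient $4$ in $4\lambda N^7$ must be shown tight by ruling out amortized strategies such as reusing a single bridge for many crossings or interleaving lifts and lowers to share cost; this demands careful case analysis on the temporal order of bridge movements, exploiting the precise structure of the swap operation on elimination trees. Equally delicate is the combinatorial lemma underlying the $(n^2 - n + 1)N^2$ term: any unbalanced partition of $\{v_1, \dots, v_{2n}\}$ by the cut should strictly increase the number of required $N^2$-cost swaps past the threshold, while the balanced case achieves exactly this count by using the bridges to mediate a matched pairing between the two halves of the $v$-vertices.
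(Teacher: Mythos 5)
Your high-level plan matches the paper's (lift the subdivision vertices of a balanced minimum cut above the cliques, charge $4\lambda N^7$ to bridge--clique crossings, charge the quadratic term to reversing $v_1,\dots,v_{2n}$), but both directions have gaps that the stated budget does not tolerate.

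In the ``if'' direction, your Phase~2 is the problem. Once the $\lambda$ bridges are lifted above everything, $H-U$ is disconnected, so the $s$-clique and the $t$-clique lie in different subtrees: every pair $(s_i,t_i)$ becomes \emph{incomparable}, and the reversal $s_i \prec t_i \to t_i \prec s_i$ happens for free when the bridges are lowered. No Phase~2 swaps exist. As you describe it --- one swap per pair, each ``involving a bridge (cost at most $N^4$)'' --- you would spend on the order of $N^3 \cdot N^4 = N^7$ extra, which is not a lower-order term: after the $\approx 4\lambda N^7$ for lifting and lowering, the remaining slack is only $(n^2-n+1)N^2$. Similarly, the $v$-reversal must be done with exactly $2\binom{n}{2} = n^2-n$ swaps of weight $N^2$ (only pairs inside the same side of the cut need a direct swap; cross pairs become incomparable), not ``at most $n^2-n+1$'': with $n^2-n+1$ such swaps your total would not be \emph{strictly} below the bound, and the $+1$ is needed as slack for the genuinely lower-order terms $4\lambda nN + 2\lambda m$.

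In the ``only if'' direction, your bridge-centric count (``at least $\lambda$ bridges are lifted, each contributing $4N^7$'') does not establish the lower bound: nothing forces a lifted bridge to pass \emph{all} $2N^3$ clique vertices, so $\lambda$ bridges could a priori contribute far less. The counting must be per clique vertex: for each $i$, the set $L_i$ of subdivision vertices swapped with $s_i$ must contain an $s$-$t$ cut set (else $s_i$ and $t_i$ could never become incomparable), hence $|L_i|\ge\lambda$, and summing $w(s_i)|L_i|+w(t_i)|R_i|$ over all $i$ in each half of the sequence gives $2\lambda N^7$ per half. This per-$i$ view is also what forces $|L_i|=|R_i|=\lambda$ exactly, which is then needed to show all these cuts coincide with a single balanced cut (the nested-components argument, which your proposal defers entirely to ``careful case analysis''). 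Finally, your ``permutation-counting argument'' for the quadratic term needs the key lemma that two vertices $u,v$ in the same component of $G-F$ remain \emph{comparable in every tree of the sequence} (because neither can afford a weight-$N^5$ swap with $s_1$ or $t_1$), so reversing them costs a direct $N^2$ swap; without this, an adversarial sequence could make them incomparable and reverse them for free, and the count $\binom{k-1}{2}+\binom{2n-k+1}{2}$, minimized only at $k=n+1$, would not be forced.
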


\subsection{Proof of Proposition~\ref{prop:01}}

\subsubsection*{Sufficiency (``if'' part)}

Suppose that $G$ has a minimum $s$-$t$ cut $X$ with $|X| = |V \setminus X| = n+1$. 
Let $U = \{u_e \mid e \in \delta_G(X)\}$. Note that $|U| = |\delta_G(X)| = \lambda$.  
Starting from $T_{\ini}$, we swap an element in $U$ and its parent repeatedly so that 
we obtain an elimination tree $T_1$ in which each element in $U$ is an ancestor of $V(H) \setminus U$. 
See \figurename~\ref{fig:reduction2}.
The total weight of swaps from $T_\ini$ to $T_1$ is at most $|U| ( 2n N + 2 N^7 + m)$. 
Since $G - \delta_G(X)$ consists of two connected components, so does $H - U$. 
Thus, $T_1 - U$ consists of two elimination trees $T_s$ and $T_t$ such that
$T_s$ contains $(X \setminus \{s\})  \cup \{s_1, \dots , s_{N^3}\} \cup \{ u_e \mid e \in E[X]\} \cup \{v' \mid v \in X\}$ 
and $T_t$ contains $((V \setminus X) \setminus \{t\}) \cup \{t_1, \dots , t_{N^3}\} \cup \{ u_e \mid e \in E[V \setminus X]\} \cup \{v' \mid v \in V \setminus X \}$.

\begin{figure}[t]
\centering
\includegraphics{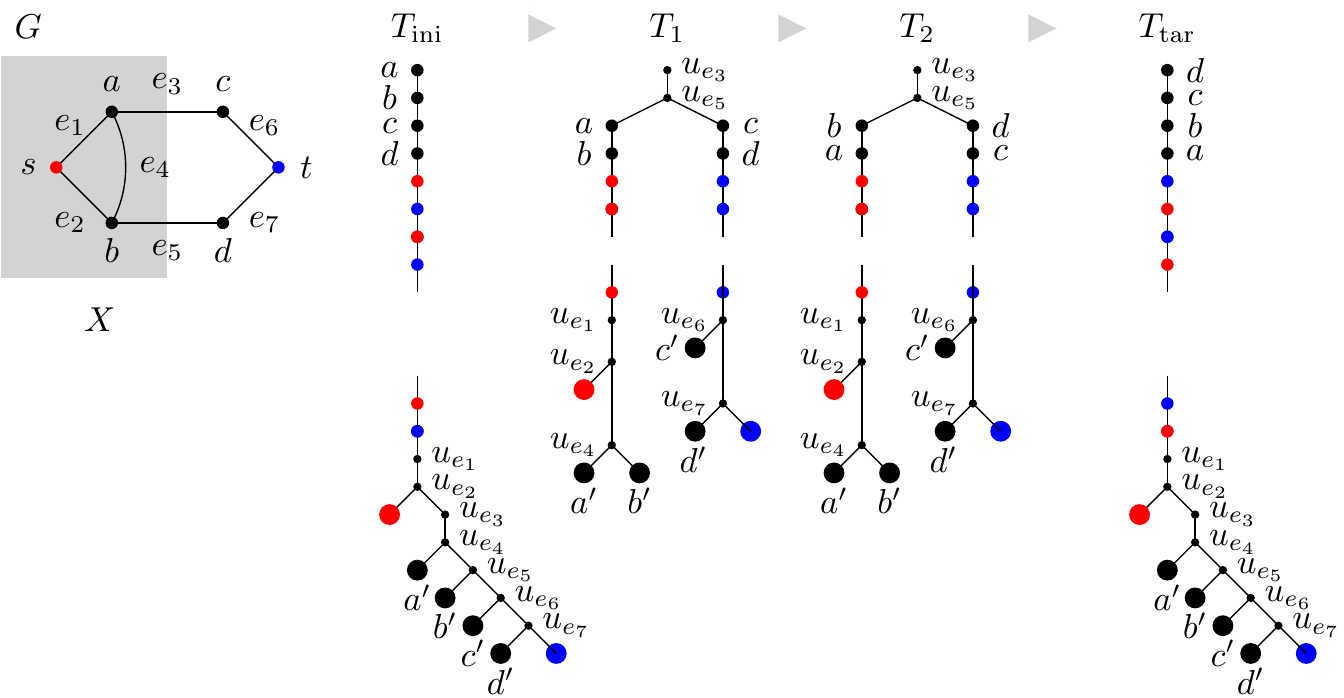}
\caption{A reconfiguration sequence from $T_\ini$ to $T_\tar$.}
\label{fig:reduction2}
\end{figure}

In $T_s$, by swapping $u$ and $v$ for every pair of $u, v \in X \setminus \{s\}$, 
we obtain an elimination tree in which $v_i$ is an ancestor of $v_j$ for $v_i, v_j \in X \setminus \{s\}$ with $i > j$. 
The total weight of these swaps is $\binom{ |X| -1}{2} \cdot N^2$. 
Similarly, by applying swaps with weight $\binom{ |V \setminus X| -1}{2} \cdot N^2$ to $T_t$, 
we obtain an elimination tree in which $v_i$ is an ancestor of $v_j$ for $v_i, v_j \in (V \setminus X) \setminus \{t\}$ with $i > j$.
Let $T_2$ be the elimination tree obtained from $T_1$ by applying these operations. 

Starting from $T_2$, we swap an element in $U$ and its child repeatedly so that 
we obtain an elimination tree $T_{\tar}$. 
This can be done by applying swaps whose total weight is at most $|U| ( 2n N + 2 N^7 + m)$. 

Therefore, the total weight of the above swaps from $T_{\ini}$ to $T_{\tar}$ is at most  
\begin{align*}
& 2 |U| ( 2n N + 2 N^7 + m) + \binom{|X|-1}{2} \cdot N^2 + \binom{|V \setminus X| -1}{2} \cdot N^2 \\ 
&\quad = 4 \lambda N^7 + n (n-1) N^2 + 4 \lambda n N + 2 \lambda m \\
&\quad < 4 \lambda N^7 + (n^2 - n +1) N^2, 
\end{align*}
where we note that $|U| = \lambda$ and $|X| = |V \setminus X| = n+1$. 
This shows the sufficiency.

\subsubsection*{Necessity (``only if'' part)}

Let $\mathbf{T}$ be a reconfiguration sequence from $T_{\ini}$ to $T_{\tar}$ whose weight is less than $4 \lambda N^7 + (n^2 - n +1) N^2$. 
Since this weight is less than $N^8$, we observe the following.
\begin{observation}
\label{obs:01}
For $v \in V$, vertex $v'$ is not swapped with other vertices in $\mathbf{T}$. 
For $i, j \in [N^3]$, none of $\mathbf{swap}(s_i, s_j)$, $\mathbf{swap}(t_i, t_j)$, $\mathbf{swap}(s_i, t_j)$, and $\mathbf{swap}(t_i, s_j)$ is applied in $\mathbf{T}$.  
\end{observation}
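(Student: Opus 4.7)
\textbf{Proof plan for Observation~\ref{obs:01}.} The strategy is to compare the total weight budget $4\lambda N^7 + (n^2 - n + 1)N^2$ allotted to $\mathbf{T}$ against the cost of any single forbidden swap, and to show that each such swap already exhausts the budget by itself.

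First I would check that, for the chosen $N = 10 n^3 m$, the budget is strictly less than $N^8$. Using $\lambda \le m$ and $n \ge 1$, the dominant term satisfies $4\lambda N^7 \le 4 m N^7 = \frac{4}{10 n^3} N^8 < N^8$, and the lower-order term $(n^2 - n + 1) N^2$ is negligible in comparison. Hence any reconfiguration sequence whose weight stays below this threshold has weight strictly less than $N^8$.

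Next I would observe that every forbidden swap already costs at least $N^8$. A swap involving some $v'$ has weight $w(v') \cdot w(x) = N^8 \cdot w(x) \ge N^8$, because every vertex of $H$ has weight at least $1$. A swap whose two endpoints are both drawn from $\{s_i : i \in [N^3]\} \cup \{t_i : i \in [N^3]\}$ has weight exactly $N^4 \cdot N^4 = N^8$. Combining these two facts finishes the proof: since $\mathrm{length}_w(\mathbf{T}) < N^8$ but every listed swap has weight at least $N^8$, no such swap can appear in $\mathbf{T}$.

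There is no real obstacle here; the argument is essentially a sanity check that the construction's choice of $N$ creates a large enough gap between the heavy weights ($N^4$ on the clique vertices and $N^8$ on the primed vertices) and the leading term $4\lambda N^7$ of the budget. The only care needed is to verify the inequality $4\lambda N^7 + (n^2 - n + 1)N^2 < N^8$ for the specific $N$ chosen, which will later allow subsequent steps of the proof to assume that primed vertices and the $s_i, t_i$ cliques behave as ``frozen'' during the reconfiguration.
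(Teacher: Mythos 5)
Your proposal is correct and matches the paper's argument: the paper simply notes that the weight of $\mathbf{T}$ is less than $N^8$ while every listed swap costs at least $w(v')\cdot 1 = N^8$ or $N^4\cdot N^4 = N^8$, exactly as you do. Your explicit verification that $4\lambda N^7 + (n^2-n+1)N^2 < N^8$ for $N = 10n^3m$ is the small arithmetic check the paper leaves implicit.
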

By Observation~\ref{obs:01}, we cannot swap $s_1$ and $t_1$ directly, and hence 
$\mathbf{T}$ contains an elimination tree $T^*$ 
in which $s_1$ and $t_1$ are incomparable. 
Then, there exists a vertex set $V^* \subseteq V(H)$ such that 
$s_1$ and $t_1$ are contained in different connected components of $H - V^*$, 
and each vertex in $V^*$ is an ancestor of $s_1$ and $t_1$ in $T^*$. 
By Observation~\ref{obs:01} again, $V^*$ does not contain $v'$ for $v \in V$, that is, $V^* \subseteq V \cup \{u_e \mid e \in E\}$. 
Note that removing $V^* \cap V$ does not affect the connectedness of $H$ since each vertex $v \in V^* \cap V$ has its copy $v'$ in $H$. 
Let
\[
  F := \{ e \in E \mid u_e \in V^* \}.
\]
Then, $s$ and $t$ are contained in different connected components of $G - F$, i.e., 
$F$ contains an $s$-$t$ cut set in $G$. 

Since removing $V$ does not affect the connectedness of $H$, we also observe the following.
\begin{observation}
\label{obs:03}
Let $T$ and $T'$ be elimination trees in $\mathbf{T}$ and let $e_1, e_2 \in E$ be distinct edges. 
If $u_{e_1} \in \mathbf{anc}_T(u_{e_2})$ and $u_{e_2} \in \mathbf{anc}_{T'}(u_{e_1})$, then 
$\mathbf{swap}(u_{e_3}, u_{e_4})$ is applied for some $e_3, e_4 \in E$ (possibly $\{e_1, e_2\} \cap \{e_3, e_4\} \neq \emptyset$) between $T$ and $T'$. 
\end{observation}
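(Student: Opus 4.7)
The plan is to derive Observation~\ref{obs:03} from a structural lemma about elementary swaps, combined with an inductive argument that uses the connectedness of $H - V$ (the hint) and Observation~\ref{obs:01}; below, I write ``$u$-$u$ swap'' for a swap $\mathbf{swap}(u_{e_3}, u_{e_4})$ between two $u_e$-vertices.

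I would first establish the lemma: for a single swap $T \to T'$ of the form $\mathbf{swap}(c, p)$ (with $c$ the child and $p$ the parent in $T$), and for any pair $x, y \in V(H)$ with $\{x, y\} \cap \{c, p\} = \emptyset$, one has $x \in \mathbf{anc}_T(y)$ if and only if $x \in \mathbf{anc}_{T'}(y)$. The proof is a case analysis on the positions of $x$ and $y$ relative to the subtree at $p$ in $T$: outside that subtree, nothing is affected; inside, rule~(2) keeps each of $p$'s other child-subtrees intact, and rule~(3) moves each of $c$'s child-subtrees as a rigid block, preserving their internal ancestor structure. The key supporting fact is that the vertex set of the subtree rooted at $p$ in $T$ coincides with that of the subtree rooted at $c$ in $T'$.

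Given the lemma, consider the first step $T_{k-1} \to T_k$ in the sequence between $T$ and $T'$ at which $u_{e_1}$ stops being an ancestor of $u_{e_2}$. By the lemma, the swap at this step must involve at least one of $u_{e_1}, u_{e_2}$; if both are involved, we already have a $u$-$u$ swap of the required form. Otherwise, a closer inspection of which swaps can cause a loss of ancestor status shows the swap must be of the form $\mathbf{swap}(c, u_{e_1})$ for some $c$ that is neither a $u_e$ nor a $v'$ (the latter excluded by Observation~\ref{obs:01}), with $u_{e_2}$ a ``stay'' descendant of $c$ per rule~(3); this in turn requires $c$ to be a cut vertex of the subgraph of $H$ induced on $u_{e_1}$'s subtree in $T_{k-1}$, separating $u_{e_1}$ from $u_{e_2}$. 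To rule out this latter possibility, I would prove by induction along the sequence that, as long as no $u$-$u$ swap has yet occurred, every $v' \in \{v' : v \in V\}$ remains a descendant of every $u_{e_j}$ in the current tree; the inductive step itself runs the same ``no local cut vertex'' argument, leveraging the connectedness of $H - V$. With this invariant in place at $T_{k-1}$, the subtree-induced subgraph at $u_{e_1}$ contains all of $\{v' : v \in V\}$, and the connectedness of $H - V$ directly precludes any single non-$u_e$, non-$v'$ vertex $c$ from separating $u_{e_1}$ from $u_{e_2}$ in this subgraph. Hence the indirect transition is impossible, forcing the swap at step $k$ to be a $u$-$u$ swap, which yields Observation~\ref{obs:03}.

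The main obstacle is the inductive structural claim: rigorously showing that $v'$-copies remain descendants of every $u_{e_j}$ along the prefix of the sequence preceding any $u$-$u$ swap. This requires careful bookkeeping of the subtree-induced subgraphs at each $u_{e_j}$ in every intermediate tree, plus a clean use of the interplay between rule~(3) and the richly connected structure of $H$ --- specifically that the $v'$-copies together with the $\{s_i\}$- and $\{t_i\}$-cliques sustain enough connectivity to keep any single non-$u_e$, non-$v'$ vertex from acting as a local cut in the relevant induced subgraphs.
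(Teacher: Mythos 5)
Your overall route---reduce to a single swap via the ``uninvolved pairs are preserved'' lemma, identify $\mathbf{swap}(c,u_{e_1})$ with $c$ a separating child as the only dangerous case, and rule it out using the twin vertices $v'$ together with Observation~\ref{obs:01}---is the right one, and it matches what the paper's one-line justification (``removing $V$ does not affect the connectedness of $H$'') is gesturing at. However, the inductive invariant you propose to carry along the sequence, namely that every $v'$ remains a \emph{descendant} of every $u_{e_j}$, is false. It already fails at $T_{\ini}$ whenever $G$ has a bridge $e$: removing $u_e$ from the subdivided copy of $G$ on $\{u_f \mid f \in E\}\cup\{v'\mid v\in V\}$ disconnects it, so some $v'$ and some $u_{e_j}$ lie in different child subtrees of $u_e$ and are incomparable. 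It can also fail later even before any $u$--$u$ swap, because swaps such as $\mathbf{swap}(u_{e_3},s_i)$ can push $u$-vertices into $\mathbf{anc}(u_{e_1})$ and thereby expel $v'$-vertices from $u_{e_1}$'s subtree. So the induction you describe cannot be completed as stated.

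What you actually need is the much weaker fact that no $v'$ is ever an \emph{ancestor} of any $u_{e_j}$ in $\mathbf{T}$, and this requires no induction conditioned on ``no $u$--$u$ swap yet'': a vertex enters $\mathbf{anc}_T(x)$ only by being one of the two vertices of a swap, no $v'$ is ever swapped by Observation~\ref{obs:01}, and no $v'$ is an ancestor of any $u_{e_j}$ in $T_{\ini}$. Granting this, let $D$ be the vertex set of the subtree of $u_{e_1}$ in $T_{k-1}$, i.e., the component of $H-\mathbf{anc}(u_{e_1})$ containing $u_{e_1}$. Your final step also overreaches: the connectedness of $H-V$ does not directly apply, since $D$ may omit many vertices of $H-V$ (namely the $u$-vertices and clique vertices lying in $\mathbf{anc}(u_{e_1})$). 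Instead, take any $u_{e_1}$--$u_{e_2}$ path in $H[D]$ and reroute it around $c$: if $c=v\in V\setminus\{s,t\}$, replace a subpath $u_e, v, u_f$ by $u_e, v', u_f$; if $c=s_i$ (or $t_i$), use $s'$ or another $s_j$. In each case the replacement vertex is adjacent to a vertex of $D$ and, by the fact above, is not in $\mathbf{anc}(u_{e_1})$, hence lies in $D$. This shows $c$ cannot separate $u_{e_1}$ from $u_{e_2}$ in $H[D]$ and closes the argument.
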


We divide $\mathbf{T}$ into two reconfiguration sequences $\mathbf{T}_1$ and $\mathbf{T}_2$, where 
$\mathbf{T}_1$ is from $T_{\ini}$ to $T^*$ and $\mathbf{T}_2$ is from $T^*$ to $T_{\tar}$. 
By symmetry, we may assume that 
\[
\mathrm{length}_w (\mathbf{T}_1) \le \frac{\mathrm{length}_w (\mathbf{T}) }{2} 
< 2 \lambda N^7 + N^3. 
\]
For $i \in [N^3]$, define 
\begin{align*}
L_i &= \{ e \in E \mid \mathbf{swap} (u_e, s_i) \text{ is applied in } \mathbf{T}_1 \}, \\
R_i &= \{ e \in E \mid \mathbf{swap} (u_e, t_i) \text{ is applied in } \mathbf{T}_1 \}. 
\end{align*}
For $i \in [N^3]$, let $\mathbf{swap}(L_i)$ denote the set of all swaps $\mathbf{swap}(u_e, s_i)$ in $\mathbf{T}_1$ with $e \in L_i$. 
Similarly, let $\mathbf{swap}(R_i)$ denote the set of all swaps $\mathbf{swap}(u_e, t_i)$ in $\mathbf{T}_1$ with $e \in R_i$.

\begin{claim}
\label{clm:04}
For $i \in [N^3]$, we have the following: 
\begin{itemize}
\item
if an edge $e \in E$ is contained in the connected component of $G-L_i$ containing $s$, then  
$u_e \in \mathbf{des}_T(s_i)$ for any elimination tree $T$ in $\mathbf{T}_1$, and
\item
if an edge $e \in E$ is contained in the connected component of $G-R_i$ containing $t$, then  
$u_e \in \mathbf{des}_T(t_i)$ for any elimination tree $T$ in $\mathbf{T}_1$. 
\end{itemize}
\end{claim}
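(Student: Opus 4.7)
The plan is to prove both bullets by induction on the length of the prefix of $\mathbf{T}_1$; by symmetry between $(s_i, L_i)$ and $(t_i, R_i)$, it suffices to treat the first. I would strengthen the statement to a per-step invariant: for every elimination tree $T_k$ appearing in $\mathbf{T}_1$, if an edge $e$ lies in the $s$-component of $G - L_i^{(k)}$, where $L_i^{(k)} \subseteq L_i$ collects those $e''$ for which $\mathbf{swap}(u_{e''}, s_i)$ has already been applied in the first $k$ steps, then $u_e \in \mathbf{des}_{T_k}(s_i)$. Since $L_i^{(k)} \subseteq L_i$, the components of $G - L_i$ refine those of $G - L_i^{(k)}$, so the $s$-component of $G - L_i$ sits inside that of $G - L_i^{(k)}$; hence the per-step invariant implies the claim.

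The base case $k = 0$ is immediate: $L_i^{(0)} = \emptyset$, and in the linear ordering defining $T_\ini$ the vertex $s_i$ precedes every $u_e$, while deleting the ancestors of $s_i$ from $H$ still leaves $s_i$ connected to every $u_e$ through the copies $v'_j$; hence $u_e \in \mathbf{des}_{T_\ini}(s_i)$ for every $e$.

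For the inductive step, let $\sigma$ be the swap carrying $T_k$ to $T_{k+1}$ and pick an edge $e$ in the $s$-component of $G - L_i^{(k+1)}$; monotonicity and the induction hypothesis give $u_e \in \mathbf{des}_{T_k}(s_i)$. If $\sigma = \mathbf{swap}(u_e, s_i)$ then $e$ is added to $L_i^{(k+1)}$ and ceases to belong to that $s$-component, so this case is vacuous. Otherwise, a careful case analysis via rules (2) and (3) of the swap definition shows that $u_e$ can leave $\mathbf{des}(s_i)$ only when $\sigma = \mathbf{swap}(p, s_i)$ for some child $p \ne u_e$ of $s_i$ in $T_k$, with $u_e$ sitting in a child-subtree $S_c$ of $p$ that contains no vertex adjacent to $s_i$ in $H$. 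By Observation~\ref{obs:01}, $p$ is neither a copy $v'_\ell, s', t'$ nor one of the $s_j, t_j$, so $p$ is either some $u_{e''}$ (in which case $e'' \in L_i^{(k+1)}$) or some $v_\ell \in V \setminus \{s, t\}$. The absence from $S_c$ of any $s_j$ with $j \ne i$ and of any $u_{e''}$ with $e'' \in \delta_G(s)$ lets one project $S_c \cup \{p\}$ to a connected subgraph of $G$ that contains $e$ but is separated from $s$ by edges already lying in $L_i^{(k+1)}$, contradicting that $e$ belongs to the $s$-component of $G - L_i^{(k+1)}$.

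The main technical obstacle is the indirect effect of rule (3): a vertex can leave $s_i$'s subtree even when it is not directly swapped with $s_i$, so projecting from $H$ back to $G$ has to be done with care. Making this step rigorous relies on a parallel invariant describing which $u_{e''}$'s can occur as ancestors of $s_i$ in $T_k$, which controls which $H$-neighbors of $s_i$ can lie inside the induced subgraph on the subtree of $p$ and thereby pins down the structure of $S_c$.
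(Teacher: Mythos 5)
Your overall strategy (induction along $\mathbf{T}_1$ with the strengthened invariant indexed by the already-applied swaps $L_i^{(k)}$) is sound, and your identification of the dangerous case---$u_e$ detaching from $\mathbf{des}(s_i)$ \emph{indirectly} via rule (3) when some other child $p$ of $s_i$ is swapped up---is exactly the right place to focus. The gap is that you never actually close that case: you assert that one can ``project $S_c \cup \{p\}$ to a connected subgraph of $G$ \dots separated from $s$ by edges already lying in $L_i^{(k+1)}$,'' and you concede that this ``relies on a parallel invariant describing which $u_{e''}$'s can occur as ancestors of $s_i$,'' which you neither state nor prove. As it stands, nothing rules out that some $u_{f}$ with $f \notin L_i^{(k+1)}$, or some other vertex, sits in $\mathbf{anc}_{T_k}(s_i)$ and thereby lets $S_c$ be cut off from $s_i$ without any forbidden swap; that is precisely the scenario your contradiction has to exclude, so the proof is incomplete at its decisive step.

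The good news is that no separate parallel invariant is needed: your own induction hypothesis already suffices. Since $e$ lies in the $s$-component of $G - L_i^{(k+1)}$, take a path in $G - L_i^{(k+1)}$ from $s$ to $e$; every edge $f$ on it is itself in that $s$-component, hence in the $s$-component of $G - L_i^{(k)}$, so by the induction hypothesis $u_f \in \mathbf{des}_{T_k}(s_i)$. Lift the path to $H$ through the primed copies $v'$; these are never swapped (Observation~\ref{obs:01}) and start below $s_i$, so they never enter $\mathbf{anc}(s_i)$. Hence the lifted path avoids $\mathbf{anc}_{T_k}(s_i)$ entirely and therefore lies inside the subtree of $T_k$ rooted at $s_i$. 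Because $S_c$ contains no $H$-neighbor of $s_i$, its only neighbor inside that subtree is $p$, so the lifted path must pass through $p$; since $p$ is neither $s_i$ nor a primed vertex, $p = u_f$ for some edge $f$ of the path, whence $f \in L_i^{(k+1)}$, contradicting the choice of the path. With this the induction closes. For comparison, the paper dispenses with all of this in two sentences: it notes that $s_i$ and $u_e$ lie in the same component of $H - \{u_f \mid f \in L_i\}$, that $u_e \in \mathbf{des}_{T_\ini}(s_i)$, and that $\mathbf{swap}(u_e, s_i)$ is never applied, and concludes directly; your write-up makes explicit the swap-rule analysis that this terse argument leaves implicit, but it stops one step short of a complete proof.
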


\begin{claimproof}
For each edge $e \in E$ in the connected component of $G-L_i$ containing $s$, 
vertices $s_i$ and $u_e$ are contained in the same connected component in $H - \{u_f \mid f \in L_i\}$. 
Since 
$u_e \in \mathbf{des}_{T_{\ini}} (s_i)$ holds
 and $\mathbf{swap}(u_e, s_i)$ is not applied in $\mathbf{T}_1$ as $e \not\in L_i$, we have that $u_e \in \mathbf{des}_{T} (s_i)$ for any elimination tree $T$ in $\mathbf{T}_1$. 
The same argument works for the second statement. 
\end{claimproof}

To simplify the notation, let $L_0 = R_0 = F$. 
For $i \in [N^3] \cup \{0\}$, let $X_i \subseteq V$ be the vertex set of the connected component of $G-L_i$ containing $s$. 
Similarly, let $Y_i \subseteq  V$ be the vertex set of the connected component of $G-R_i$ containing $t$. 

\begin{claim}
\label{clm:05}
For $i, j \in [N^3] \cup \{0\}$ with $j > i$, we have the following: 
\begin{enumerate}[(i)]
\item
$(E[X_j] \setminus L_j) \cap L_i = \emptyset$, and 
\item
$(E[Y_j] \setminus R_j) \cap R_i = \emptyset$. 
\end{enumerate}
\end{claim}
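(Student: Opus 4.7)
The plan is to prove part~(i); part~(ii) follows by an entirely symmetric argument, obtained by replacing $s, L_\bullet, X_\bullet$ with $t, R_\bullet, Y_\bullet$. I will fix a supposed $e \in (E[X_j] \setminus L_j) \cap L_i$ and derive a contradiction by combining Claim~\ref{clm:04} with an invariance property stating that the ancestor-descendant order among the clique vertices $s_1, \dots, s_{N^3}$ is frozen throughout $\mathbf{T}_1$.

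The key is the following invariance lemma: for any $i' < j'$ in $[N^3]$, $s_{i'} \in \mathbf{anc}_T(s_{j'})$ for every elimination tree $T$ appearing in $\mathbf{T}_1$. First, since both $s_{i'}$ and $s_{j'}$ lie in the clique that replaces $s$, the pair $\{s_{i'}, s_{j'}\}$ is an edge of $H$; by a standard fact about elimination trees, adjacent vertices of $H$ are comparable in every elimination tree of $H$ (they cannot be separated into distinct subtrees of a common ancestor). Thus, the invariance reduces to showing that the direction of this comparability never flips along $\mathbf{T}_1$. Direct swaps of $s_{i'}$ and $s_{j'}$ are ruled out by Observation~\ref{obs:01}, while any other swap $\mathbf{swap}(u, v)$ with $\{u, v\} \cap \{s_{i'}, s_{j'}\} = \emptyset$ preserves the ancestor-descendant relation between $s_{i'}$ and $s_{j'}$; this is a general property of the swap operation, verifiable by a short case analysis on how the subtrees rooted at $v$ are redistributed between $v$ and $u$. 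Since $s_{i'} \in \mathbf{anc}_{T_{\ini}}(s_{j'})$ by construction, the invariance follows.

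Given the invariance, I handle the two cases $i \geq 1$ and $i = 0$. If $i \geq 1$, the assumption $e \in L_i$ implies that $\mathbf{swap}(u_e, s_i)$ is applied at some elimination tree $T \in \mathbf{T}_1$, so $u_e$ is a child of $s_i$ in $T$. Since $e \in E[X_j] \setminus L_j$, Claim~\ref{clm:04} yields $u_e \in \mathbf{des}_T(s_j)$, which forces $s_j$ to be a strict ancestor of $s_i$ in $T$ (noting $s_j \neq s_i$), contradicting the invariance. If $i = 0$, then $e \in F$, so $u_e \in V^* \subseteq \mathbf{anc}_{T^*}(s_1)$. Since $s_1$ is either equal to $s_j$ (when $j = 1$) or an ancestor of $s_j$ in $T^*$ (when $j \geq 2$, by invariance), we conclude $u_e \in \mathbf{anc}_{T^*}(s_j)$, again contradicting $u_e \in \mathbf{des}_{T^*}(s_j)$ from Claim~\ref{clm:04}.

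The main obstacle is the invariance lemma, and more specifically the sublemma that $\mathbf{swap}(u, v)$ preserves the ancestor-descendant relation between any pair of vertices disjoint from $\{u, v\}$. Although intuitively clear from the swap rules (the ancestors of the untouched vertices change only in that $u$ and $v$ may be reshuffled among themselves), a careful proof requires a short case analysis on the three subtree-redistribution rules in the definition of a swap. Once the invariance is in place, the rest of the argument is a direct combination of Claim~\ref{clm:04}, Observation~\ref{obs:01}, and the defining property of $V^*$ as ancestors of $s_1$ (and $t_1$) in $T^*$.
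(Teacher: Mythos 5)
Your proposal is correct and follows essentially the same route as the paper's proof: it combines Claim~\ref{clm:04} with the fact that $s_i$ remains an ancestor of $s_j$ (for $i<j$) throughout $\mathbf{T}_1$, then splits into the cases $i\ge 1$ and $i=0$ exactly as the paper does. The only difference is that you state and justify this ancestor invariance explicitly (via Observation~\ref{obs:01} and the preservation of order between vertices untouched by a swap), whereas the paper asserts $s_i \in \mathbf{anc}_T(s_j)$ without comment.
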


\begin{claimproof}
To show (i), assume to the contrary that there exists $e \in (E[X_j] \setminus L_j) \cap L_i$ for some $j>i$. Note that $j \in [N^3]$. 
Since $e \in E[X_j] \setminus L_j$, Claim~\ref{clm:04} shows that 
$u_e \in \mathbf{des}_{T} (s_j)$ for any elimination tree $T$ in $\mathbf{T}_1$. 
If $i \ge 1$, then since $u_e \in \mathbf{des}_{T} (s_j)$ and $s_i \in \mathbf{anc}_{T} (s_j)$, we see that $u_e$ and $s_i$ are not adjacent in $T$. 
This implies that $\mathbf{swap}(u_e, s_i)$ is not applied in $\mathbf{T}_1$, which contradicts $e \in L_i$. 
If $i = 0$, then $e \in L_0 = F$ implies that $u_e \in \mathbf{anc}_{T^*} (s_1) \subseteq \mathbf{anc}_{T^*} (s_j)$, which contradicts $u_e \in \mathbf{des}_{T} (s_j)$ for any $T$. 
The same argument works for (ii). 
\end{claimproof}

\begin{claim}
\label{clm:09}
$X_0 \supseteq X_1 \supseteq X_2 \supseteq \dots \supseteq X_{N^3}$ and
$Y_0 \supseteq Y_1 \supseteq Y_2 \supseteq \dots \supseteq Y_{N^3}$. 
\end{claim}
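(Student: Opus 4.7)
My plan is to derive the monotonicity from Claim~\ref{clm:05} by a short contradiction argument; I would only prove the $X$-chain since the $Y$-chain is literally symmetric (swap the roles of $s,t$ and of $L,R$).

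Concretely, I would fix indices $0 \le i < j \le N^3$ and assume for contradiction that $X_j \not\subseteq X_i$. Pick a vertex $v \in X_j \setminus X_i$. Since $v$ lies in the same connected component of $G - L_j$ as $s$, there is an $s$-$v$ path $P$ in $G - L_j$, and every edge of $P$ lies in $E[X_j] \setminus L_j$ by definition of $X_j$. On the other hand, $s \in X_i$ while $v \notin X_i$, so $P$ must contain at least one edge $e^*$ with exactly one endpoint in $X_i$; by the very definition of $X_i$ as a connected component of $G - L_i$, any such edge must belong to $L_i$. Thus $e^* \in (E[X_j] \setminus L_j) \cap L_i$, which directly contradicts Claim~\ref{clm:05}(i). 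Hence $X_j \subseteq X_i$, yielding $X_0 \supseteq X_1 \supseteq \dots \supseteq X_{N^3}$.

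The same reasoning, with $t$ in place of $s$ and Claim~\ref{clm:05}(ii) in place of Claim~\ref{clm:05}(i), gives $Y_0 \supseteq Y_1 \supseteq \dots \supseteq Y_{N^3}$.

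I do not anticipate a serious obstacle here: Claim~\ref{clm:05} already packages the nontrivial content (the interaction between $L_i$ and the component structure governed by $L_j$), and the step above is purely a routine boundary-edge argument on connected components. The only delicate point to state cleanly is the fact that any edge crossing the boundary of a component of $G - L_i$ must belong to $L_i$, which is immediate from the definition of a connected component but worth mentioning explicitly since the cases $i = 0$ (where $L_0 = F$, and one uses that $F$ contains an $s$-$t$ cut set, so $X_0$ is a well-defined component of $G - F$ containing $s$) and $i \ge 1$ are treated uniformly.
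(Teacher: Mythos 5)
Your proof is correct and follows essentially the same route as the paper: both arguments rest on Claim~\ref{clm:05}(i) together with the connectivity of $X_j$ via edges of $E[X_j]\setminus L_j$, the paper stating directly that these edges survive in $G-L_i$ so $X_j$ lies in one component of $G-L_i$ containing $s$, while you phrase the same fact contrapositively via a boundary-crossing edge. No gap.
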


\begin{claimproof}
Let $i, j \in [N^3] \cup \{0\}$ be indices with $j > i$. 
Since $(E[X_j] \setminus L_j) \cap L_i = \emptyset$ by Claim~\ref{clm:05} (i), all vertices in $X_j$ are contained in the same connected component of $G - L_i$. 
Since both $X_i$ and $X_j$ contain $s$, we obtain $X_j \subseteq X_i$. 
This shows that $X_0 \supseteq X_1 \supseteq X_2 \supseteq \dots \supseteq X_{N^3}$. 
Similarly, we obtain $Y_0 \supseteq Y_1 \supseteq Y_2 \supseteq \dots \supseteq Y_{N^3}$. 
\end{claimproof}

\begin{claim}
\label{clm:07}
For $i \in [N^3]$, we have $|L_i| = |R_i| = \lambda$, $L_i = \delta_G(X_i)$, and $R_i = \delta_G(Y_i)$. 
\end{claim}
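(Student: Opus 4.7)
The plan is to combine a weight-budget bound against a cut-lower-bound to pin down $|L_i|$ and $|R_i|$ exactly, from which the cut equality follows.

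First I would set up the weight lower bound. Each application of $\mathbf{swap}(u_e, s_i)$ costs $w(u_e) \cdot w(s_i) = N^4$, and by the definition of $L_i$ there is at least one such swap in $\mathbf{T}_1$ for every $e \in L_i$. Hence the swaps in $\bigcup_{i\in [N^3]} \mathbf{swap}(L_i) \cup \bigcup_{i\in [N^3]} \mathbf{swap}(R_i)$ contribute weight at least $N^4 \sum_{i\in [N^3]}(|L_i|+|R_i|)$ to $\mathbf{T}_1$. Since $\mathrm{length}_w(\mathbf{T}_1) < 2\lambda N^7 + N^3$ and all quantities are integers, this yields
\[
\sum_{i\in [N^3]} (|L_i| + |R_i|) \le 2\lambda N^3.
\]

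Next I would establish a matching lower bound on each $|L_i|$ and $|R_i|$. Because $F = L_0$ contains an $s$-$t$ cut set in $G$ (this is the construction of $V^*$ and $F$ preceding Claim~\ref{clm:04}), we have $t \notin X_0$; by Claim~\ref{clm:09}, $X_i \subseteq X_0$, so $t \notin X_i$ for every $i \in [N^3]$. Since $X_i$ is the component of $G - L_i$ containing $s$, every edge of $G$ with exactly one endpoint in $X_i$ must lie in $L_i$, i.e.\ $\delta_G(X_i) \subseteq L_i$. Moreover $\delta_G(X_i)$ separates $s$ from $t$ in $G$, so $|\delta_G(X_i)| \ge \lambda$, giving $|L_i| \ge \lambda$. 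The symmetric argument for $Y_i$ (using $s \notin Y_0$) yields $|R_i| \ge \lambda$.

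Combining the two bounds, $\sum_{i\in [N^3]}(|L_i|+|R_i|) \ge 2 N^3 \lambda$ with $\le 2 \lambda N^3$ forces equality, so $|L_i| = |R_i| = \lambda$ for every $i \in [N^3]$. Finally, since $\delta_G(X_i) \subseteq L_i$ and $|\delta_G(X_i)| \ge \lambda = |L_i|$, we conclude $L_i = \delta_G(X_i)$, and analogously $R_i = \delta_G(Y_i)$.

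I expect no substantial obstacle; the only subtle point is making sure that $t \notin X_i$ (and $s \notin Y_i$) so that $\delta_G(X_i)$ is genuinely an $s$-$t$ cut set and the $\lambda$-lower bound applies. This is handled cleanly by invoking Claim~\ref{clm:09} together with the property of $F$ already established before Claim~\ref{clm:04}. The rest is a direct averaging/counting argument from the weight budget $2\lambda N^7 + N^3$.
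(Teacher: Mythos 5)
Your proposal is correct and follows essentially the same route as the paper: the lower bound $|L_i|, |R_i| \ge \lambda$ via $\delta_G(X_i) \subseteq L_i$ being an $s$-$t$ cut set (using Claim~\ref{clm:09} and the fact that $F$ contains an $s$-$t$ cut set), combined with the weight budget $\mathrm{length}_w(\mathbf{T}_1) < 2\lambda N^7 + N^3$ to force $|L_i| = |R_i| = \lambda$, and then cardinality equality to get $L_i = \delta_G(X_i)$. Your rearrangement of the counting step is algebraically identical to the paper's.
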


\begin{claimproof}
Since $F$ contains an $s$-$t$ cut set, it holds that $X_0 \subseteq V \setminus \{t\}$. 
For $i \ge 1$, since $s \in X_i \subseteq X_0 \subseteq V \setminus \{t\}$ by Claim~\ref{clm:09}, we see that  $\delta_G(X_i)$ is an $s$-$t$ cut set contained in $L_i$. 
Similarly, $R_i$ contains an $s$-$t$ cut set in $G$. 
Therefore, we obtain $|L_i|, |R_i| \ge \lambda$ for any $i \in [N^3]$. 
By considering the weight of $\mathbf{T}_1$, we obtain 
\begin{align*}
2 \lambda  N^7 + N^3 &> \mathrm{length}_w (\mathbf{T}_1) \\ 
&\ge \sum_{i=1}^{N^3} (w(s_i) |L_i| + w(t_i) |R_i|) \\
&= 2 \lambda N^7 + N^4 \sum_{i=1}^{N^3} ( (|L_i| -\lambda) + (|R_i| -\lambda)), 
\end{align*}
which shows that $|L_i| = |R_i| = \lambda$ for any $i \in [N^3]$. 
Therefore, each of $L_i$ and $R_i$ is a minimum $s$-$t$ cut set in $G$, and hence $L_i = \delta_G(X_i)$ and $R_i = \delta_G(Y_i)$ hold. 
\end{claimproof}

Since the total weight of $\mathbf{swap} (L_i)$ and $\mathbf{swap}(R_i)$ is at least $2\lambda N^7$ by this claim, we see that
$u_e$ and $s_i$ (resp.~$t_i$) are swapped exactly once in $\mathbf{T}_1$ for $e \in L_i$ (resp.~$e \in R_i$) and for $i \in [N^3]$. 
For $i \in [N^3]$, let $T_i$ (resp.~$T'_i$) be the elimination tree that appears in $\mathbf{T}_1$ after all the swaps in $\mathbf{swap} (L_i)$ (resp.~$\mathbf{swap} (R_i)$) are just applied. 

\begin{claim}
\label{clm:11}
Elimination trees $T'_{N^3}, T_{N^3}, T'_{N^3-1}, T_{N^3-1}, \dots , T'_1, T_1$ appear in this order in $\mathbf{T}_1$. 
\end{claim}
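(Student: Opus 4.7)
The plan is to exploit the chain structure of the clique vertices in the initial tree $T_\ini$ and show that each vertex $u_e$ which eventually passes the chain must do so in a strictly bottom-up order. I would first verify that in $T_\ini$, the vertices $s_1, t_1, s_2, t_2, \dots, s_{N^3}, t_{N^3}$ form a descending chain in which each vertex is the unique immediate parent of the next. This follows from the construction of the elimination tree from the ordering $\prec$ together with the observation that, after removing any initial segment of these vertices under $\prec$, the subgraph of $H$ induced on the remaining vertices stays connected through the untouched clique edges and through the vertices $u_e$ and $v'$, so exactly one child survives at each level.

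The heart of the argument is a \emph{lift lemma}: the first swap in $\mathbf{swap}(L_i)$ must be preceded by at least one swap in $\mathbf{swap}(R_i)$, and (for $i < N^3$) the first swap in $\mathbf{swap}(R_i)$ must be preceded by at least one swap in $\mathbf{swap}(L_{i+1})$. The reason is that $s_i$'s unique child in $T_\ini$ is $t_i$, and by Observation~\ref{obs:01} no direct clique--clique swap can alter this; hence for any $u$-vertex to ever become a child of $s_i$, some $u$-vertex must first be lifted past $t_i$, which is exactly a swap in $\mathbf{swap}(R_i)$. The analogous statement for $t_i$ (needing a prior swap past $s_{i+1}$) follows by the same reasoning, and indirect ascents through $u$-$u$ swaps must still trace back, via Observation~\ref{obs:03}, to such a direct swap.

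To promote this to a statement about the \emph{last} swaps, which determine the positions of $T_i$ and $T'_i$, I would combine the lift lemma with the tight budget from Claim~\ref{clm:07}: exactly $\lambda$ swaps occur with each of $s_i$ and $t_i$. Each swap in $\mathbf{swap}(L_i)$ ``consumes'' a slot at $s_i$ that was originally created by a swap in $\mathbf{swap}(R_i)$, possibly after $u$-$u$ rearrangements. Since there are exactly $\lambda$ such slots and exactly $\lambda$ consumptions, an exchange argument shows that the last consumption cannot precede the last slot creation, so the last swap in $\mathbf{swap}(L_i)$ occurs after the last swap in $\mathbf{swap}(R_i)$, giving $T'_i$ before $T_i$. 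A symmetric argument between levels $i$ and $i-1$ gives $T_i$ before $T'_{i-1}$. Iterating from $i = N^3$ down to $i = 1$ yields the full ordering $T'_{N^3}, T_{N^3}, T'_{N^3-1}, T_{N^3-1}, \dots, T'_1, T_1$.

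The main technical obstacle will be making the ``slot consumption'' argument precise in the dynamically changing tree, because rule~(3) of the swap operation can redistribute children of $s_i$ and $t_i$ in non-obvious ways whenever a $u$-vertex rises past them. I expect to handle this by introducing a monotone potential function that counts the cumulative number of $u$-vertices that have ever been direct children of $s_i$ (respectively of $t_i$), and by showing using the lift lemma and Observation~\ref{obs:03} that this potential can increase only through swaps in $\mathbf{swap}(R_i)$ (respectively $\mathbf{swap}(L_{i+1})$), so that the inequality of counts required to place the last swaps in the claimed order is forced.
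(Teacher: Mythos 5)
Your reduction of the ordering statement to a first-swap ``lift lemma'' plus a counting/potential argument has a genuine gap, and the gap sits exactly where the paper's proof does its real work. The claim concerns the \emph{last} swaps in $\mathbf{swap}(R_i)$ and $\mathbf{swap}(L_i)$ (these determine where $T'_i$ and $T_i$ sit), and your proposed bridge from first to last does not hold as stated. The potential ``cumulative number of $u$-vertices that have ever been direct children of $s_i$'' can increase without any swap in $\mathbf{swap}(R_i)$: if $u_e$ is currently a child of $s_i$ and $u_f$ is a child of $u_e$, then $\mathbf{swap}(u_f,u_e)$ makes $u_f$ a new child of $s_i$; and rule~(3) applied during a swap $\mathbf{swap}(u_e,s_i)\in\mathbf{swap}(L_i)$ can itself promote children of $u_e$ to children of $s_i$. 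More importantly, the bijection you need --- each swap in $\mathbf{swap}(L_i)$ matched to a \emph{distinct} earlier swap in $\mathbf{swap}(R_i)$ --- is precisely what fails a priori: a single swap $\mathbf{swap}(u_e,t_i)$ carries along, via rule~(3), the retained subtrees of $u_e$, which could in principle contain several vertices $u_f$ with $f\in L_i$ that thereby all escape from below $t_i$ at once. Counting slots cannot rule this out.

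What rules it out, and what your proposal never invokes, is the \emph{minimality} of the cut $R_i$ from Claim~\ref{clm:07}: since $R_i$ is a minimum $s$-$t$ cut set, $G-(R_i\setminus\{e\})$ is connected for every $e\in R_i$. The paper's proof takes $e$ to be the edge whose swap with $t_i$ is the last one in $\mathbf{swap}(R_i)$, chooses $f\in L_i\setminus(R_i\setminus\{e\})$ (which exists because $|L_i|=|R_i|$), and uses the connectivity of $G-(R_i\setminus\{e\})$, exactly as in Claim~\ref{clm:04}, to conclude that $u_f$ is still a descendant of $t_i$ in \emph{every} tree preceding $T'_i$; hence $u_f$ is not adjacent to $s_i$ there, so $\mathbf{swap}(u_f,s_i)$ --- and therefore $T_i$ --- must come after $T'_i$ (and symmetrically for $T'_i$ versus $T_{i+1}$). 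Your argument uses Claim~\ref{clm:07} only for the cardinality $|L_i|=|R_i|=\lambda$, not for this connectivity consequence, and without it the ordering of the \emph{last} swaps is not forced. The chain structure of $s_1,t_1,\dots,s_{N^3},t_{N^3}$ in $T_\ini$ and the first-swap lift lemma are fine as far as they go, but they do not substitute for this step.
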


\begin{claimproof}
We first show that $T_i$ appears after $T'_i$ for $i \in [N^3]$. 
Let $e \in R_i$ be the edge such that $\mathbf{swap}(u_e, t_i)$ is applied just before obtaining $T'_i$.
Ler $f \in L_i \setminus (R_i \setminus \{e\})$, where the existence of such $f$ is guaranteed by $|L_i|=|R_i|$. 
Since $R_i$ is a minimum $s$-$t$ cut set by Claim~\ref{clm:07}, we see that $G - (R_i \setminus \{e\})$ is connected. 
Then, for any elimination tree $T$ before $T'_i$, we have $u_{e'} \in \mathbf{des}_T(t_i)$ for any $e' \in E \setminus (R_i \setminus \{e\})$. 
In particular, $u_f \in \mathbf{des}_T(t_i)$. 
Since $s_i \in \mathbf{anc}_T(t_i)$, we see that $u_f$ and $s_i$ are not adjacent in $T$. 
This shows that  we cannot apply $\mathbf{swap} (u_{f}, s_i)$ before $T'_i$. 
Therefore, $T_i$ appears after $T'_i$ in $\mathbf{T}_1$.

By the same argument, we can show that $T'_i$ appears after $T_{i+1}$ for $i \in [N^3-1]$. 
Therefore, $T'_{N^3}$, $T_{N^3}$, $T'_{N^3-1}$, $T_{N^3-1}, \dots , T'_1$, $T_1$ appear in this order.  
\end{claimproof}

\begin{claim}
\label{clm:12}
For $i \in [N^3]$, we have the following: 
\begin{itemize}
\item
$u_e \in \mathbf{anc}_{T_i}(u_{e'})$ for any $e \in L_i$ and $e' \in E \setminus L_i$, and 
\item
$u_e \in \mathbf{anc}_{T'_i}(u_{e'})$ for any $e \in R_i$ and $e' \in E \setminus R_i$.  
\end{itemize}
\end{claim}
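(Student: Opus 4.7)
The plan is to prove the first bullet; the second follows by symmetry upon swapping $s, t, L_i, R_i, X_i, Y_i$. The main tool is the structural characterization of elimination trees: for any elimination tree $T$ of $H$ and any $v \in V(H)$, the set $\mathbf{des}_T(v) \cup \{v\}$ equals the vertex set of the connected component of $H - \mathbf{anc}_T(v)$ containing $v$.

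First I would show that in $T_i$, every $u_e$ with $e \in L_i$ is an ancestor of $s_i$. By the observation preceding Claim~\ref{clm:11}, each pair $(u_e, s_i)$ is swapped exactly once in $\mathbf{T}_1$, and by definition of $T_i$ all such swaps have been carried out by the time $T_i$ is reached, so the single application of $\mathbf{swap}(u_e, s_i)$ has already lifted $u_e$ above $s_i$. Arguing that no subsequent swap of a different pair can undo this relation is a subtle point, which I would address via the structural characterization together with the observation that any such demotion would force $u_e$ to re-enter the subtree of $s_i$; a further analysis using Observation~\ref{obs:01} (which rules out swaps among the clique vertices and the $v'$-copies) would show that this cannot happen without a second application of $\mathbf{swap}(u_e, s_i)$, contradicting the single-swap property.

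Granted this, the ancestors of $s_i$ in $T_i$ form a chain, so $\{u_f : f \in L_i\}$ is totally ordered by ancestry in $T_i$; let $u_{e^\star}$ be its deepest element. Since every other $u_f$ with $f \in L_i$ is an ancestor of $u_{e^\star}$, we have $\mathbf{des}_{T_i}(u_f) \supseteq \mathbf{des}_{T_i}(u_{e^\star}) \cup \{u_{e^\star}\}$, so it is enough to prove $u_{e'} \in \mathbf{des}_{T_i}(u_{e^\star})$ for every $e' \in E \setminus L_i$. Using $L_i = \delta_G(X_i)$ from Claim~\ref{clm:07}, we split $E \setminus L_i = E[X_i] \cup E[V \setminus X_i]$ disjointly. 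For $e' \in E[X_i]$, Claim~\ref{clm:04} yields $u_{e'} \in \mathbf{des}_{T_i}(s_i) \subseteq \mathbf{des}_{T_i}(u_{e^\star})$.

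The case $e' \in E[V \setminus X_i]$ is the main obstacle. By the structural characterization applied to $u_{e^\star}$, this reduces to showing that $u_{e^\star}$ and $u_{e'}$ lie in the same connected component of $H - \mathbf{anc}_{T_i}(u_{e^\star})$. The choice of $u_{e^\star}$ forces $\mathbf{anc}_{T_i}(u_{e^\star}) \cap \{u_f : f \in L_i\} = \{u_f : f \in L_i \setminus \{e^\star\}\}$, and by the minimality of $L_i$ (Claim~\ref{clm:07}), the set $L_i \setminus \{e^\star\}$ is not an $s$-$t$ cut in $G$; hence $u_{e^\star}$ and $u_{e'}$ are connected in $H - \{u_f : f \in L_i \setminus \{e^\star\}\}$. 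The remaining task is to argue that the extra vertices in $\mathbf{anc}_{T_i}(u_{e^\star}) \setminus \{u_f : f \in L_i \setminus \{e^\star\}\}$ do not disconnect them. Here I would exploit the redundancy hard-wired into $H$: every $v \in V$ has a twin $v'$ that never moves by Observation~\ref{obs:01} and therefore provides a detour around any promoted $v$, while the tight weight budget for $\mathbf{T}_1$ (less than $2\lambda N^7 + N^3$) sharply restricts which additional vertices can be promoted above $u_{e^\star}$ (in particular it forbids the weight-$N^5$ swaps $\mathbf{swap}(v, s_j)$ and $\mathbf{swap}(v, t_j)$ for $v \in V \setminus \{s,t\}$). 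Making this alternate-path argument fully rigorous is the principal technical effort I anticipate.
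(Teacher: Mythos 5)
Your strategy differs from the paper's, and it leaves the hardest step unproven. The paper does not analyze $T_i$ directly; it looks at the tree $T$ immediately \emph{before} the last swap $\mathbf{swap}(u_f,s_i)$ of $\mathbf{swap}(L_i)$. At that moment only $L_i\setminus\{f\}$ has been lifted above $s_i$, and since $L_i$ is a \emph{minimum} $s$-$t$ cut set (Claim~\ref{clm:07}), the graph $G-(L_i\setminus\{f\})$ is still connected; the Claim~\ref{clm:04}-type connectivity argument therefore gives $u_{e'}\in\mathbf{des}_T(s_i)$ for \emph{every} $e'\in E\setminus(L_i\setminus\{f\})$, including the edges on the $t$-side, while $u_e\in\mathbf{anc}_T(s_i)$ for $e\in L_i\setminus\{f\}$. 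The final swap then transfers the entire subtree of $s_i$ below $u_f$ at once, which yields both containments immediately.

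Your route instead tries to certify, at $T_i$ itself, that $u_{e'}\in\mathbf{des}_{T_i}(u_{e^\star})$ for $e'\in E[V\setminus X_i]$ by showing that $u_{e'}$ and $u_{e^\star}$ lie in the same component of $H-\mathbf{anc}_{T_i}(u_{e^\star})$. This requires controlling the whole set $\mathbf{anc}_{T_i}(u_{e^\star})$, and the two tools you invoke do not suffice. The twins $v'$ only provide detours around promoted vertices of $V$; they do nothing for subdivision vertices $u_{e''}$, which are cut vertices for their edges and have no twins. The weight budget does not exclude promotions of subdivision vertices either: swaps among the $u_{e''}$'s cost $1$ each and up to nearly $N^3$ of them are affordable, so a priori many $u_{e''}$ with $e''\in E\setminus L_i$ could sit above $u_{e^\star}$ in $T_i$ and separate it from $u_{e'}$. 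Ruling this out is essentially equivalent to the statement being proved, and you explicitly defer it (``the principal technical effort I anticipate''). So the proposal has a genuine gap exactly at the case you identify as the main obstacle; the fix is the paper's device of working one swap earlier, where connectivity of $G-(L_i\setminus\{f\})$ does all the work. (Your first step --- that all $u_e$ with $e\in L_i$ stay above $s_i$ once swapped --- is also only sketched, but that part is repairable by the budget argument ruling out a second swap of the pair together with a comparability argument; it is not the essential difficulty.)
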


\begin{claimproof}
Let $T$ be an elimination tree in $\mathbf{T}_1$ just before $T_i$. Then, there exists an edge $f \in L_i$ such that 
$T_i$ is obtained from $T$ by applying $\mathbf{swap}(u_{f}, s_i)$. 
Since $G - (L_i \setminus \{f\})$ is connected by Claim~\ref{clm:07}, 
we have $u_e \in \mathbf{anc}_T(s_i)$ for $e \in L_i \setminus \{f\}$ and 
$u_{e'} \in \mathbf{des}_T(s_i)$ for $e' \in E \setminus (L_i \setminus \{f\})$. 
Therefore, after applying $\mathbf{swap}(u_{f}, s_i)$, we obtain
$u_e \in \mathbf{anc}_{T_i}(f)$ for $e \in L_i \setminus \{f\}$ and 
$u_{e'} \in \mathbf{des}_{T_i}(f)$ for $e' \in E \setminus L_i$. 
This shows that 
$u_e \in \mathbf{anc}_{T_i}(u_{e'})$ for any $e \in L_i$ and $e' \in E \setminus L_i$. 
By the same argument, we obtain  
$u_e \in \mathbf{anc}_{T'_i}(u_{e'})$ for any $e \in R_i$ and $e' \in E \setminus R_i$. 
\end{claimproof}

\begin{claim}
\label{clm:10}
$X_1 = V \setminus Y_1$. 
\end{claim}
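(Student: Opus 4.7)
My plan is to reduce Claim~\ref{clm:10} to the identity $L_1 = R_1$ and then derive a contradiction assuming $L_1 \neq R_1$. The equivalence is straightforward: Claim~\ref{clm:07} gives $L_1 = \delta_G(X_1)$ and $R_1 = \delta_G(Y_1)$, so $L_1 = R_1 =: F$ implies $G - F$ decomposes into exactly the two components $X_1$ and $V \setminus X_1$; since $t \in V \setminus X_1$, this forces $Y_1 = V \setminus X_1$, and the converse is immediate. In addition, using Claim~\ref{clm:09} together with the fact that $X_0$ and $Y_0$ are distinct connected components of $G - F$, I obtain $X_1 \cap Y_1 \subseteq X_0 \cap Y_0 = \emptyset$. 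Hence it remains to show $X_1 \cup Y_1 = V$, equivalently $L_1 = R_1$.

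Suppose for contradiction that $Z := V \setminus (X_1 \cup Y_1)$ is nonempty. Since $|L_1| = |R_1| = \lambda$ by Claim~\ref{clm:07} and $X_1$, $Y_1$, $Z$ partition $V$, the edges in $L_1 \cap R_1$ lie between $X_1$ and $Y_1$, those in $L_1 \setminus R_1$ between $X_1$ and $Z$, and those in $R_1 \setminus L_1$ between $Y_1$ and $Z$; this yields $|L_1 \setminus R_1| = |R_1 \setminus L_1| > 0$. I plan to derive the contradiction by analyzing the portion of $\mathbf{T}_1$ between $T'_1$ and $T_1$. By Claim~\ref{clm:12}, the \emph{top $u$-layer}---the set of $u_e$ that dominate every other $u_{e'}$ in the elimination tree---is $\{u_e : e \in R_1\}$ in $T'_1$ and $\{u_e : e \in L_1\}$ in $T_1$; by Claim~\ref{clm:11}, between these two trees only $\mathbf{swap}(L_1)$ (of total weight $\lambda N^4$) and ``extra'' swaps of total weight less than $N^3$ can be applied. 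The main task is to trace how each swap $\mathbf{swap}(u_f, s_1)$ with $f \in L_1$ affects the relative ancestry of $u_f$ and $u_{e'}$ for $e' \in R_1$, exploiting that for $f \in L_1 \setminus R_1$ the edge $f$ has one endpoint in $X_1$ and the other in $Z$, whereas $e' \in R_1 \setminus L_1$ has endpoints in $Y_1$ and $Z$.

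The main obstacle is this final tracing step. Each swap's effect depends not only on the current tree structure but also on the adjacency structure of $H$ via the subtree rule for elimination trees, and one must carefully verify that cheap extras (weight-$1$ swaps $\mathbf{swap}(u_{e''}, u_{e'''})$ or weight-$N$ swaps $\mathbf{swap}(v_i, u_e)$) cannot carry out the required reorganization from the $R_1$-top to the $L_1$-top when $Z \neq \emptyset$. The heart of the argument should be that promoting $u_f$ (for $f \in L_1 \setminus R_1$) above $u_{e'}$ (for $e' \in R_1 \setminus L_1$) requires either a direct swap at a moment when the swap rule is inapplicable because $u_f$ and $u_{e'}$ lie in different branches of $T'_1$, or an indirect chain whose cumulative cost exceeds the remaining budget; in either case we reach a contradiction with the weight bound on $\mathbf{T}_1$, establishing $L_1 = R_1$ and hence Claim~\ref{clm:10}.
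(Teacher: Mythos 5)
Your setup is fine: the disjointness of $X_1$ and $Y_1$ via Claim~\ref{clm:09}, the reduction of the claim to $X_1\cup Y_1=V$ (equivalently $L_1=R_1$), and the observation that $Z\neq\emptyset$ forces $L_1\setminus R_1$ and $R_1\setminus L_1$ to be nonempty all match the paper. The gap is in the final step, and it is not just a missing verification --- the argument as planned cannot close. You try to derive the contradiction entirely from the single interval of $\mathbf{T}_1$ between $T'_1$ and $T_1$, hoping to show that reorganizing the top layer from $\{u_e : e\in R_1\}$ to $\{u_e : e\in L_1\}$ is either impossible or costs more than the remaining budget. Neither holds: by Observation~\ref{obs:03} (which is exactly the right tool here), reversing the relative order of $u_{f}$ and $u_{f'}$ for $f\in L_1\setminus R_1$, $f'\in R_1\setminus L_1$ merely forces \emph{some} swap $\mathbf{swap}(u_{e_3},u_{e_4})$ of weight $1$ in that interval. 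The slack in the bound $\mathrm{length}_w(\mathbf{T}_1)<2\lambda N^7+N^3$ beyond the mandatory $2\lambda N^7$ is almost $N^3$, so a single unit-weight swap (or indeed thousands of them) in one interval yields no contradiction. There is no moment at which ``the swap rule is inapplicable'' in a way that blocks the adversary, because the adversary is free to first pay one cheap $u$--$u$ swap to rearrange the top layer.

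The missing idea is aggregation over all $i\in[N^3]$. From $X_1\cup Y_1\subsetneq V$ and the nesting $X_i\subseteq X_1$, $Y_i\subseteq Y_1$ of Claim~\ref{clm:09}, one gets $X_i\neq V\setminus Y_i$, hence $L_i\neq R_i$, for \emph{every} $i\in[N^3]$; Claim~\ref{clm:12} then gives $u_{f_i}\in\mathbf{anc}_{T_i}(u_{f'_i})$ and $u_{f'_i}\in\mathbf{anc}_{T'_i}(u_{f_i})$ for suitable $f_i\in L_i\setminus R_i$, $f'_i\in R_i\setminus L_i$, and Observation~\ref{obs:03} forces at least one $u$--$u$ swap between $T'_i$ and $T_i$. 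Claim~\ref{clm:11} guarantees these $N^3$ intervals are pairwise disjoint in time, so $\mathbf{T}_1$ must contain at least $N^3$ unit-weight swaps on top of the $2\lambda N^7$ from $\mathbf{swap}(L_i)$ and $\mathbf{swap}(R_i)$, giving $\mathrm{length}_w(\mathbf{T}_1)\ge 2\lambda N^7+N^3$ and the desired contradiction. You should replace your single-interval cost analysis with this counting over all $N^3$ indices.
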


\begin{claimproof}
Observe that $X_0$ and $Y_0$ are disjoint since $F = L_0 =R_0$ contains an $s$-$t$ cut set in $G$. 
Since $X_1 \subseteq X_0$ and $Y_1 \subseteq Y_0$ by Claim~\ref{clm:09}, we see that $X_1$ and $Y_1$ are disjoint. 
To derive a contradiction, 
assume that $X_1 \neq V \setminus Y_1$, that is, $X_1$ and $Y_1$ are disjoint sets with $X_1 \cup Y_1 \subsetneq V$. 
Then, by Claim~\ref{clm:09}, we obtain $X_i \neq V \setminus Y_i$ for any $i \in [N^3]$. 
This shows that $L_i \neq R_i$ for any $i \in [N^3]$. 
Since $|L_i| = |R_i| =\lambda$, there exist $f_i \in L_i \setminus R_i$ and $f'_i \in R_i \setminus L_i$. 
By Claim~\ref{clm:12}, 
we obtain $u_{f_i} \in \mathbf{anc}_{T_i}(u_{f'_i})$ and $u_{f'_i} \in \mathbf{anc}_{T'_i}(u_{f_i})$. 
By Observation~\ref{obs:03}, $\mathbf{swap}(u_e, u_{e'})$ is applied for some $e, e' \in E$ between $T'_i$ and $T_i$. 
Since such a swap is required for each $i \in [N^3]$, by Claim~\ref{clm:11}, 
we have to swap pairs in $\{u_e \mid e \in E\}$ at least $N^3$ times in $\mathbf{T}_1$. 
Therefore, we obtain 
\[
 \mathrm{length}_w (\mathbf{T}_1) \ge \sum_{i=1}^{N^3} (w(s_i) |L_i| + w(t_i) |R_i|)  + N^3 = 2 \lambda N^7 + N^3, 
\]
which contradicts $\mathrm{length}_w (\mathbf{T}_1) <  2 \lambda N^7 + N^3$. 
\end{claimproof}

\begin{claim}
\label{clm:13}
$F = \delta_G(X_1) = \delta_G(Y_1)$. 
\end{claim}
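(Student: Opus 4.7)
The plan is to deduce Claim~\ref{clm:13} essentially as a bookkeeping consequence of Claims~\ref{clm:05}, \ref{clm:07}, and \ref{clm:10}, using only the fact that $F$ already contains an $s$-$t$ cut set of $G$, which was established in the discussion surrounding the definition of $V^*$ and $F$.

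First, I would note that Claim~\ref{clm:10} gives $X_1 = V \setminus Y_1$, and hence $\delta_G(X_1) = \delta_G(Y_1)$ is immediate. So the only substantive task is to prove $F = \delta_G(X_1)$.

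Next, I would establish the inclusion $F \subseteq \delta_G(X_1)$. Applying Claim~\ref{clm:05}(i) with $i=0$ and $j=1$ yields $(E[X_1] \setminus L_1) \cap L_0 = \emptyset$. Since $L_0 = F$ and, by Claim~\ref{clm:07}, $L_1 = \delta_G(X_1)$, while trivially $E[X_1] \cap \delta_G(X_1) = \emptyset$, this gives $E[X_1] \cap F = \emptyset$. The symmetric application of Claim~\ref{clm:05}(ii) with $i=0$, $j=1$ gives $E[Y_1] \cap F = \emptyset$. Since $\{X_1, Y_1\}$ partitions $V$ by Claim~\ref{clm:10}, every edge of $G$ lies in $E[X_1] \cup E[Y_1] \cup \delta_G(X_1)$, so the two emptiness statements force $F \subseteq \delta_G(X_1)$.

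Finally, I would invoke the cardinality bound: $F$ contains an $s$-$t$ cut set of $G$ (shown in the paragraph introducing $V^*$ and $F$), so $|F| \ge \lambda$. Combined with $F \subseteq \delta_G(X_1)$ and $|\delta_G(X_1)| = |L_1| = \lambda$ from Claim~\ref{clm:07}, this yields $F = \delta_G(X_1)$, which together with the first paragraph proves the claim. There is no real obstacle beyond correctly matching the indices $i=0$, $j=1$ in Claim~\ref{clm:05} and remembering that $L_0 = R_0 = F$ was merely a notational convention rather than an application of Claim~\ref{clm:07} (which only handles $i \ge 1$).
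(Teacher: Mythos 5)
Your proof is correct and follows essentially the same route as the paper: Claims~\ref{clm:07} and~\ref{clm:10} identify $L_1 = R_1 = \delta_G(X_1) = \delta_G(Y_1)$, Claim~\ref{clm:05} with $i=0$, $j=1$ gives $F \cap E[X_1] = F \cap E[Y_1] = \emptyset$ and hence $F \subseteq \delta_G(X_1)$, and the fact that $F$ contains an $s$-$t$ cut set forces equality. You merely make explicit the cardinality argument ($|F| \ge \lambda = |\delta_G(X_1)|$) that the paper leaves implicit in its final sentence.
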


\begin{claimproof}
Claims~\ref{clm:07} and~\ref{clm:10} show that $L_1= R_1 = \delta_G(X_1) = \delta_G(Y_1)$. 
This together with Claim~\ref{clm:05} shows that 
$F \cap E[X_1] = F \cap E[Y_1] = \emptyset$. 
Since $F$ contains an $s$-$t$ cut set in $G$, we obtain $F = \delta_G(X_1) = \delta_G(Y_1)$. 
\end{claimproof}

\begin{claim}
\label{clm:14}
Let $T$ be an elimination tree in $\mathbf{T}_1$. 
If two vertices $u, v \in V \setminus \{s, t\}$ are contained in the same connected component in $G - F$, 
then $u$ and $v$ are comparable in $T$. 
\end{claim}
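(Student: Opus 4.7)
Plan. I will prove the claim by induction on the index of $T$ in the reconfiguration sequence $\mathbf{T}_1 = \langle T_\ini = T^{(0)}, T^{(1)}, \dots, T^*\rangle$. The base case $T = T_\ini$ is immediate, because in $T_\ini$ all vertices $v_1, v_2, \dots, v_{2n}$ lie on a common root-to-descendant chain, so every pair among them is comparable regardless of the $G - F$ hypothesis.

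For the inductive step, let $T^{(k+1)}$ be obtained from $T = T^{(k)}$ by a single swap $\mathbf{swap}(p, c)$, fix $u, v \in V \setminus \{s, t\}$ in the same component $X$ of $G - F$ (so $X \in \{X_1, Y_1\}$ by Claim~\ref{clm:10}), and assume by the inductive hypothesis that $u, v$ are comparable in $T$ with, without loss of generality, $u \in \mathbf{anc}_T(v)$. A case analysis of the elimination-tree swap rule shows that the only configuration that can destroy this ancestor--descendant relation is the following: $p = u$, $c \neq v$, the vertex $v$ lies in a subtree $S$ rooted at some child of $c$ in $T$, and $S$ stays rooted at $c$ in $T^{(k+1)}$ rather than migrating upward to become a subtree of $u$. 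By the swap rule, $S$ stays at $c$ precisely when the vertex set of $S$ is not in the same connected component of $H[\mathbf{des}_T(u) \cup \{u\}] - c$ as $u$, where $H$ denotes the ambient graph of the reduction.

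To rule out this bad configuration, the plan is to exhibit a walk from $u$ to $v$ inside $H[\mathbf{des}_T(u) \cup \{u\}] - c$. I start from a $uv$-path $u = w_0, w_1, \dots, w_\ell = v$ in the connected subgraph $G[X]$, which lifts to the alternating walk $u - u_{e_1} - w_1' - u_{e_2} - w_2' - \cdots - u_{e_\ell} - v$ in $H$, where $e_r = \{w_{r-1}, w_r\}$ and $w_r'$ denotes the copy of $w_r$ in $H$. Observation~\ref{obs:01} guarantees that the copies $w_r'$ are never subject to any swap, and a parallel inductive argument shows that each $w_r'$ remains inside $\mathbf{des}_T(u) \cup \{u\}$ throughout $\mathbf{T}_1$; likewise, the subdivision vertices $u_{e_r}$ with $e_r \notin F$ remain there by combining Claim~\ref{clm:07} with the weight bound on $\mathbf{T}_1$. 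A local reroute then handles the removal of the single vertex $c$: if $c = v_k$ for some $v_k$, bypass through the copy $v_k'$; if $c = u_e$ for some $e \in E[X]$, pick a different $uv$-path in $G[X]$ avoiding the edge $e$.

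The main obstacle will be the case $c = u_e$ where $e$ is a bridge of $G[X]$, since no $G[X]$-reroute avoiding $e$ then exists. I expect to dispose of this remaining configuration by an analysis using Claim~\ref{clm:12} and the weight-budget constraints on $\mathbf{T}_1$, showing that such a configuration either forces $u_e$ not to be a child of $u = v_i$ at step $k$, or forces $v$ to lie outside $\mathbf{des}_T(c)$, in either case yielding the desired contradiction.
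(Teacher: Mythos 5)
Your case analysis of the swap rule is correct as far as it goes---the only way a pair with $u \in \mathbf{anc}_T(v)$ can become incomparable is indeed the configuration you isolate---but the proposal has a genuine gap precisely where the real work lies. First, the two supporting invariants you invoke (that every copy $w_r'$ and every subdivision vertex $u_{e_r}$ with $e_r \in E[X]$ stays inside $\mathbf{des}_T(u)$ throughout $\mathbf{T}_1$) are not obtained by a ``parallel inductive argument'': they are statements of exactly the same type as the one being proved, and establishing them needs the same connectivity-plus-weight analysis, so the induction does not close as written. Second, the bridge case you defer is not a mop-up detail: if $e$ is a bridge of $G$ inside $X$, then $H - u_e$ is genuinely disconnected, so if the configuration could occur the subtree containing $v$ really would stay with $u_e$ and the claim would fail. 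The actual reason it cannot occur is that $u_e$ with $e \in E[X_1]$ can never be an ancestor of $v$ in the first place: by Claims~\ref{clm:04} and~\ref{clm:13} we have $u_e \in \mathbf{des}_T(s_1)$, so $u_e \in \mathbf{anc}_T(v)$ would force $s_1 \in \mathbf{anc}_T(v)$, which would require the forbidden swap of $s_1$ with $v$ (weight $N^5$, exceeding the residual budget of $N^3$ after the mandatory $2\lambda N^7$). Claim~\ref{clm:12} and weight bounds alone, as you propose, do not supply this; the missing ingredient is the interaction with $s_1$.

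That missing ingredient is essentially the paper's entire proof, which bypasses your induction altogether: it shows that every $u \in X_1 \setminus \{s\}$ satisfies $u \in \mathbf{anc}_T(s_1)$ in every tree of $\mathbf{T}_1$ (using $u_e \in \mathbf{des}_T(s_1)$ for $e \in E[X_1]$ together with the impossibility of $\mathbf{swap}(s_1, u)$), and then observes that two ancestors of the common vertex $s_1$ lie on the single root-to-$s_1$ path and are therefore automatically comparable (symmetrically with $t_1$ for $Y_1$). I recommend abandoning the step-by-step tracking of the swap dynamics and anchoring the argument to $s_1$ and $t_1$ in this way.
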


\begin{claimproof}
By Claim~\ref{clm:13}, $G - F$ consists of two connected components $G[X_1]$ and $G[Y_1]$. 
We first consider the case when $u, v \in X_1 \setminus \{s\}$. 
By Claims~\ref{clm:04} and~\ref{clm:13},  
we obtain $u_e \in \mathbf{des}_T(s_1)$ for any $e \in E[X_1]$. 
Furthermore, 
since $\mathrm{length}_w(\mathbf{T}_1) < 2 \lambda N^7 + N^3$ holds and the total weight of $\mathbf{swap} (L_i)$ and $\mathbf{swap}(R_i)$ is $2\lambda N^7$, 
neither $\mathbf{swap}(s_1, u)$ nor $\mathbf{swap}(s_1, v)$ is applied in $\mathbf{T}_1$, because  $w(s_1) w(u) = w(s_1) w(v) = N^5$. 
Therefore, we obtain $u \in \mathbf{anc}_T(s_1)$ and $v \in \mathbf{anc}_T(s_1)$, which shows that 
 $u$ and $v$ are comparable in $T$. 
The same argument works when $u, v\in Y_1 \setminus \{t\}$. 
\end{claimproof}

Since the weight of $\mathbf{T}_1$ is at least $\sum_{i=1}^{N^3} (w(s_i) |L_i| + w(t_i) |R_i|) = 2 \lambda N^7$, we obtain 
\[
\mathrm{length}_w (\mathbf{T}_2) =  \mathrm{length}_w (\mathbf{T}) - \mathrm{length}_w (\mathbf{T}_1) < 2 \lambda N^7 + N^3. 
\]
Hence, the above argument (Claims~\ref{clm:04}--\ref{clm:14}) can be applied also to the reverse sequence of $\mathbf{T}_2$. 
In particular, Claim~\ref{clm:14} holds even if $\mathbf{T}_1$ is replaced with $\mathbf{T}_2$. 
Therefore, 
if two vertices $u, v \in V\setminus \{s, t\}$ are contained in the same connected component in $G - F$, 
then $u$ and $v$ are comparable in any elimination tree in $\mathbf{T}$. 
For such a pair of vertices $u$ and $v$, the only way to reverse the ordering of $u$ and $v$ is to apply $\mathbf{swap}(u, v)$ or $\mathbf{swap}(v, u)$. 

Recall that $G-F$ consists of two connected components $G[X_1]$ and $G[Y_1]$ by Claim~\ref{clm:13}. 
Since the ordering of $v_1, \dots , v_{2n}$ are reversed from $T_{\ini}$ to $T_{\tar}$, 
we see that $\mathbf{swap}(u, v)$ or $\mathbf{swap}(v, u)$ has to be applied in $\mathbf{T}$ if $u, v \in X_1 \setminus \{s\}$ or $u, v \in Y_1 \setminus \{t\} = (V \setminus X_1) \setminus \{t\}$. 
Furthermore, we have to swap some elements in $\{u_e \mid e \in E\}$ and $\{s_1, t_1, \dots , s_{N^3}, t_{N^3}\}$ in $\mathbf{T}_2$, 
whose total weight is at least $2\lambda N^7$ in the same way as $\mathbf{T}_1$.
With these observations, 
we evaluate the weight of $\mathbf{T}$ as follows, where we denote $k = |X_1|$ to simplify the notation:  
\begin{align*}
\mathrm{length}_w (\mathbf{T}) 
&\ge 2\lambda N^7 + 2\lambda N^7 + \binom{|X_1| -1}{2} \cdot N^2 + \binom{|V \setminus X_1| - 1}{2} \cdot N^2 \\
&= 4 \lambda N^7 + \frac{(k-1)(k-2)}{2} N^2 + \frac{(2n-k+1)(2n-k)}{2} N^2 \\
&= 4 \lambda N^7 + (k^2 - 2(n+1) k + 2n^2 + n+1) N^2 \\
&= 4 \lambda N^7 +  (n^2 - n) N^2 + (k - n - 1)^2 N^2.  
\end{align*}
This together with $\mathrm{length}_w (\mathbf{T})  < 4\lambda N^7 + (n^2 - n +1) N^2$ shows that   
$(k - n - 1)^2 < 1$, and hence $k=n+1$ by the integrality of $k$ and $n$. 

Therefore, we obtain $|X_1| = k = n+1$ and $|Y_1| = |V \setminus X_1| = n+1$. 
Since $|\delta_G(X_1)| = |L_1| = \lambda$, this shows that $X_1$ is a desired $s$-$t$ cut in $G$.

\section{Hardness of the Unweighted Problem (Proof of Theorem~\ref{thm:hardness})}
\label{sec:hardnessunweighted}

To show Theorem~\ref{thm:hardness}, we reduce \textsc{Weighted Combinatorial Shortest Path on Graph Associahedra} to \textsc{Combinatorial Shortest Path on Graph Associahedra}. 
An operation called \emph{projection} (e.g.\ \cite{defga-CardinalPVP}) plays an important role in our validity proof. 

\subsection{Useful Operation: Projection}

Let $G=(V, E)$ be a graph and let $T$ be an elimination tree associated with $G$. 
For $U \subseteq V$ such that $G[U]$ is connected, let  $T|_U$ be the elimination tree associated with $G[U]$ that preserves the ordering in $T$. 
That is, $u \in \mathbf{anc}_{T|_U} (v)$ if and only if $u \in \mathbf{anc}_{T} (v)$ and 
$u$ are $v$ are connected in $G[U] - \mathbf{anc}_{T} (u)$ for $u, v \in U$. 
Note that such $T|_U$ is uniquely determined. 
We call $T|_U$ the \emph{projection} of $T$ to $U$. 
See \figurename~\ref{fig:restriction1} for illustration.

\begin{figure}[t]
    \centering
    \includegraphics{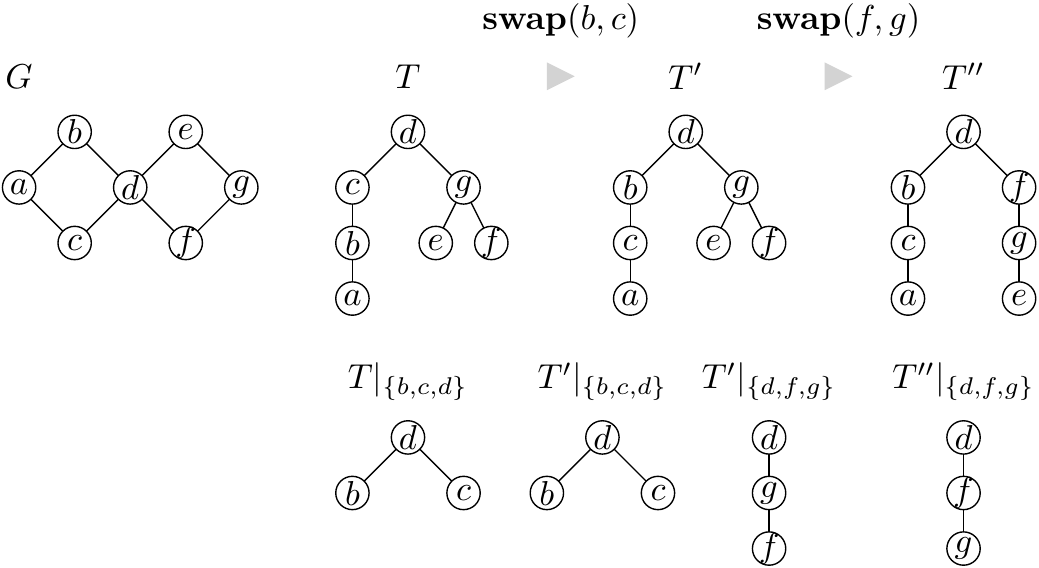}
    \caption{An example of projections. Note that $T|_{\{b,c,d\}} = T'|_{\{b,c,d\}}$ since $b$ and $c$ are incomparable in $T|_{\{b,c,d\}}$, and $T''|_{\{d,f,g\}}$ is obtained from $T'|_{\{d,f,g\}}$ by swapping $f$ and $g$ since $f$ and $g$ are adjacent in $T'|_{\{d,f,g\}}$.}
    \label{fig:restriction1}
\end{figure}

\begin{lemma}
\label{lem:restriction}
Let $U \subseteq V$ be a vertex set such that $G[U]$ is connected. 
Let $T$ and $T'$ be elimination trees associated with $G$ such that 
$T'$ is obtained from $T$ by applying $\mathbf{swap}(u, v)$, where $u, v \in V$. 
\begin{enumerate}
\item If $\{u, v\} \subseteq U$, then either $T'|_U = T|_U$ or $T'|_U$ is obtained from $T|_U$ by applying $\mathbf{swap}(u, v)$. 
\item Otherwise, $T'|_U = T|_U$. 
\end{enumerate}
\end{lemma}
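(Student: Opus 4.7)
The plan is to translate both sides of the claim to statements about linear extensions of elimination-tree orders and then conclude via a short case analysis. I rely on the following fact: for any ordering $\prec$ on $V$ linearly extending the ancestor order of $T$ (i.e., $u \in \mathbf{anc}_T(v) \Rightarrow u \prec v$), the projection $T|_U$ is exactly the elimination tree of $G[U]$ produced by $\prec|_U$. The proof of this fact uses the key observation that two vertices adjacent in $G$ are always comparable in $T$; from this, one shows that the connected component of any $u \in U$ in $G[U] - \mathbf{anc}_T(u)$ is contained in $\mathbf{des}_T(u) \cup \{u\}$, so it equals its connected component in the smaller graph $G[U] - \{w \in U : w \prec u\}$.

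Let $v$ denote the parent of $u$ in $T$, so that $\mathbf{swap}(u, v)$ is the swap being applied. Choose a linear extension $\prec$ of $T$'s ancestor order in which $u$ immediately follows $v$; for instance, take a preorder DFS of $T$ that visits $u$'s subtree first among the subtrees hanging off $v$. Let $\prec'$ be the ordering obtained from $\prec$ by transposing $u$ and $v$. A direct check from the rules defining the swap operation shows that $\prec'$ linearly extends $T'$'s ancestor order. Combined with the fact above, this yields that $T|_U$ and $T'|_U$ are, respectively, the elimination trees of $G[U]$ induced by $\prec|_U$ and $\prec'|_U$.

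In Case 2 ($\{u,v\} \not\subseteq U$), at most one of $u, v$ appears in the restriction to $U$, and since $u$ and $v$ are adjacent in $\prec$, the orderings $\prec|_U$ and $\prec'|_U$ coincide; hence $T|_U = T'|_U$. In Case 1 ($\{u, v\} \subseteq U$), both $u$ and $v$ occur in $\prec|_U$ as an adjacent pair (in the order $v, u$), and in $\prec'|_U$ as an adjacent pair in the opposite order. Now apply the standard analysis of adjacent transpositions for orderings on $G[U]$: transposing two adjacent elements of the ordering leaves the induced elimination tree unchanged when these two elements are incomparable in that tree (giving $T'|_U = T|_U$), and otherwise realizes the tree-level swap of the corresponding parent-child pair (giving $T'|_U = \mathbf{swap}(u,v)(T|_U)$).

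The main obstacle is verifying that $\prec'$ is a linear extension of $T'$'s ancestor order: this requires inspecting how the swap redistributes subtrees from $v$ to $u$ in $T'$ and checking that every new ancestor-descendant relation thus introduced is respected by $\prec'$; the choice of $\prec$ making $u$ immediately follow $v$ ensures this, since for any descendant $x$ of $v$ in $T$ other than $u$ one has $v \prec x$, hence $u \prec' x$. The remaining ingredients -- the ordering-based characterization of projections and the adjacent-transposition rule for a single graph -- are standard and require only routine verification.
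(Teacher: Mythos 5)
Your proof is correct, but it takes a genuinely different route from the paper's. The paper argues directly at the level of trees: it asserts that a swap involving a vertex outside $U$ cannot affect $T|_U$ (item 2), and for item 1 it observes that, since $u$ is a child of $v$ in $T$, the pair $u, v$ must be either adjacent (parent--child) or incomparable in $T|_U$, and then reads the two outcomes off the definitions of projection and swap --- a three-sentence argument that leaves the verifications implicit. You instead route everything through linear extensions: you first prove that projection commutes with the map sending an ordering to its elimination tree (your ``fact''), then realize the swap as an adjacent transposition of a carefully chosen linear extension of $T$, and finally reduce both items to how the elimination tree of $G[U]$ behaves under restriction and adjacent transposition of orderings. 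The two ingredients this requires --- that the component of $u$ in $G[U] - \mathbf{anc}_T(u)$ lies in $\mathbf{des}_T(u) \cup \{u\}$, so that the extra vertices removed by passing from $\mathbf{anc}_T(u)$ to the $\prec$-prefix are irrelevant, and that transposing $u$ and $v$ in $\prec$ yields a linear extension of $T'$ --- are both verified correctly in your sketch, and your final dichotomy (adjacent versus incomparable in the restricted ordering) coincides with the paper's dichotomy in $T|_U$. What your approach buys is a uniform, mechanically checkable justification for the steps the paper merely asserts, and the ordering-based picture of projections is reusable elsewhere; what it costs is length and the burden of the two auxiliary verifications. Both arguments are sound.
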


\begin{proof}
Since all the vertices in $V \setminus U$ are removed when we construct $T|_U$, 
$\mathbf{swap}(u, v)$ affects $T|_U$ only if $\{u, v\} \subseteq U$, which proves the second item. 
For the first item, suppose that $\{u, v\} \subseteq U$. 
Then, $u$ and $v$ are adjacent or incomparable in $T|_U$. 
If they are adjacent, then $T'|_U$ is obtained from $T|_U$ by applying $\mathbf{swap}(u, v)$. 
If they are incomparable, then  $T'|_U = T|_U$. 
\end{proof}

\subsection{Reduction}

Suppose we are given 
a graph $G=(V, E)$, two elimination trees $T_{\ini}$ and $T_{\tar}$, 
and a weight function $w \colon V \to \mathbb{Z}_{>0}$, which form an instance of  \textsc{Weighted combinatorial Shortest Path on Graph Associahedra}. 
Then, we replace each vertex $v \in V$ with a clique of size $w(v)$. 
Formally, consider a graph $G'=(V', E')$ such that
$V' = \{ v_i \mid v \in V,\ i \in \{1, \dots , w(v)\} \}$, and 
$\{u_i, v_j\} \in E'$ if $\{u, v\} \in E$ or $u=v$. 
Let $T'_{\ini}$ (resp.~$T'_{\tar}$) be the elimination tree obtained from $T_{\ini}$ (resp.~$T_{\tar}$) by replacing a vertex $v \in V$ with a path $v_1, v_2, \dots , v_{w(v)}$. 
That is, for distinct $u, v \in V$, there is an arc $(u, v)$ in $T_{\ini}$ (resp.~$T_{\tar}$) if and only if $(u_{w(u)}, v_1)$ is an arc of $T'_{\ini}$ (resp.~$T'_{\tar}$). 
Note that the obtained elimination tree is associated with $G'$. 
This defines an instance of \textsc{Combinatorial Shortest on Graph Associahedra}.

\subsection{Validity}
In what follows, we show that the 
obtained instance of  \textsc{Combinatorial Shortest Path on Graph Associahedra} has a reconfiguration sequence of length at most $\ell$ if and only if 
the original instance of \textsc{Weighted Combinatorial Shortest Path on Graph Associahedra} has a reconfiguration sequence $\mathbf{T}$ with $\mathrm{length}_w (\mathbf{T}) \le \ell$.

\subsubsection*{Sufficiency (``if'' part)}

Suppose that the original instance of \textsc{Weighted Combinatorial Shortest Path on Graph Associahedra} has a reconfiguration sequence $\mathbf{T}$ from $T_\ini$ to $T_\tar$. 
Then, we construct a reconfiguration sequence $\mathbf{T}'$ from $T'_\ini$ to $T'_\tar$
by replacing each swap $\mathbf{swap}(u, v)$ in $\mathbf{T}$ with $w(u) \cdot w(v)$ swaps $\{\mathbf{swap}(u_i, v_j) \mid i \in [w(u)],\ j \in [w(v)]\}$.  
This gives a reconfiguration sequence from $T'_\ini$ to $T'_\tar$ whose length is $\mathrm{length}_w (\mathbf{T})$, which shows the sufficiency.

\subsubsection*{Necessity (``only if'' part)}

Suppose that the obtained instance of  \textsc{Combinatorial Shortest Path on Graph Associahedra} has a reconfiguration sequence $\mathbf{T}'$ from $T'_{\ini}$ to $T'_{\tar}$ of length at most $\ell$. 
For any $v \in V$, since $v_1, \dots , v_{w(v)}$ form a clique, they are comparable in any elimination tree in $\mathbf{T}'$. Furthermore, since $v_1, \dots , v_{w(v)}$ are aligned in this order in both $T'_{\ini}$ and $T'_{\tar}$, 
we may assume that $\mathbf{swap}(v_i, v_j)$ is not applied in $\mathbf{T}'$ for any $i, j \in [w(v)]$. 

Let $\Phi$ be the set of all maps $\phi \colon V \to \mathbb{Z}$ such that $\phi(v) \in \{1, \dots , w(v)\}$ for any $v \in V$. Note that $|\Phi| = \prod_{v \in V} w(v)$. 
For $\phi \in \Phi$, define $U_\phi = \{v_{\phi(v)} \mid v \in V\}$. 
Note that $G'[U_\phi]$ is isomorphic to $G$, and hence it is connected. 
By projecting each elimination tree in $\mathbf{T}'$ to $U_\phi$, we obtain a sequence of elimination trees. 
Lemma~\ref{lem:restriction} shows that this forms a reconfiguration sequence, say $\mathbf{T}_\phi$, if we remove duplications when the same elimination tree appears consecutively. 
Since $G'[U_\phi]$ is isormorphic to $G$, by idenfitying $v_{\phi(v)}$ with $v$ for each $v\in V$, we can regard $\mathbf{T}_\phi$ as a recofiguration sequence from $T_\ini$ to $T_\tar$.  
That is, $\mathbf{T}_\phi$ is regarded as a feasible solution of the original instance of \textsc{Weighted Combinatorial Shortest Path on Graph Associahedra}. 

In what follows, we consider reconfiguration sequences $\{\mathbf{T}_\phi \mid \phi \in \Phi\}$ and show that 
a desired sequence exists among them. 
Suppose that $\mathbf{swap}(u_i, v_j)$ is applied in $\mathbf{T}'$, where $u, v \in V$, $i \in [w(u)]$, and $j \in [w(v)]$. 
Then, Lemma~\ref{lem:restriction} shows that the corresponding swap operation $\mathbf{swap}(u_i, v_j)$, which is identified with $\mathbf{swap}(u, v)$, is applied in $\mathbf{T}_\phi$ only if  $\phi(u) =i$ and $\phi(v)=j$. 
Thus, such a swap is applied in at most $| \{\phi \in \Phi \mid \phi(u)=i,\ \phi(v)=j\} | = |\Phi | / (w(u) \cdot w(v))$ sequences in $\{\mathbf{T}_\phi \mid \phi \in \Phi\}$. 
Therefore, we obtain 
\begin{align*}
\sum_{\phi \in \Phi} \mathrm{length}_w (\mathbf{T}_\phi) 
&= \sum_{\phi \in \Phi} \sum_{\mathbf{swap}(u, v) \in \mathbf{T}_\phi} w(\mathbf{swap}(u, v)) \\ 
&\le \sum_{\mathbf{swap}(u_i, v_j) \in \mathbf{T}'} w(\mathbf{swap}(u, v)) \cdot  \frac{|\Phi |}{w(u) \cdot w(v)}\\
&= \mathrm{length} (\mathbf{T}') \cdot |\Phi | \le \ell \cdot |\Phi |,  
\end{align*}
where each reconfiguration sequence is regarded as a multiset of swaps.  
Therefore, 
\[
\min_{ \phi \in \Phi} (\mathrm{length}_w (\mathbf{T}_\phi)) \le \frac{1}{|\Phi|} \sum_{\phi \in \Phi} \mathrm{length}_w (\mathbf{T}_\phi) \le \ell. 
\]
Hence, there exists $\phi \in \Phi$ such that $\mathbf{T}_\phi$ is a desired sequence. 
This shows the necessity.

Therefore, the weighted problem can be reduced to the unweighted problem, and hence Theorem~\ref{thm:hardnessweighted} implies Theorem~\ref{thm:hardness}.

\section{Hardness for Polymatroids
(Proof of Theorem~\ref{thm:hardness-polymatroid})}
\label{sec:polymatroid}

In this section, we prove Theorem~\ref{thm:hardness-polymatroid}. 
We reduce 
\textsc{Combinatorial Shortest Path on Graph Associahedra}
to 
\textsc{Combinatorial Shortest Path on Polymatroids}. 
Assume that we are given an instance 
$G=(V,E)$, $T_{\ini}$, and 
$T_{\tar}$ of 
\textsc{Combinatorial Shortest Path on Graph Associahedra}. 
To this end, we 
construct a polymatroid $(V,f)$ 
satisfying the following conditions. 
\begin{enumerate}
\item
$\mathbf{B}(f)$ is a realization of 
the $G$-associahedron.
\item
For each subset $X \subseteq V$, 
we can evaluate the value $f(X)$ in time 
bounded by a polynomial in the size of $G$.
\item 
We can find the extreme points $x_\ini, x_\tar$
of $\mathbf{B}(f)$
corresponding to $T_{\ini}, T_{\tar}$, respectively, 
in time 
bounded by a polynomial in the size of $G$. 
\end{enumerate}

We first argue that the conditions above suffice for our proof.
Suppose the existence of a polymatroid $(V, f)$ with the properties above.
Then, we may construct a polynomial-time algorithm for \textsc{Combinatorial Shortest Path on Graph Associahedra} with a fictitious polynomial-time algorithm for \textsc{Combinatorial Shortest Path on Polymatroids} as follows.
Let $(G, T_\ini, T_\tar)$ be an instance of \textsc{Combinatorial Shortest Path on Graph Associahedra}.
From Properties 1 and 3, we can construct an instance $((V, f), x_\ini, x_\tar)$ of \textsc{Combinatorial Shortest Path on Polymatroids} in polynomial time.
By the fictitious polynomial-time algorithm, we can solve the instance in time bounded by a polynomial in $|V|$ and the number of oracle calls to $f$.
By Property 2, this running time is bounded by a polynomial in $|V|$.
Thus, we find a solution to $(G, T_\ini, T_\tar)$ in polynomial time, and the proof is completed.

In our construction of such a polymatroid $(V,f)$, 
we use the realization of the $G$-associahedron by Devadoss~\cite{DBLP:journals/dm/Devadoss09}, which can be described as follows. 

Let $G = (V,E)$ be a connected undirected graph
with at least two vertices. We 
define $n := |V|$. 
For each subset $X \subseteq V$,
we call  
a connected component\footnote{%
More precisely speaking, $C$ is the vertex set of a connected component.}
$C$ of $G - X$ 
a  \emph{non-trivial} 
(resp.\ \emph{trivial}) connected component 
if $|C| \ge 2$ (resp.\ $|C|=1$). 
For each subset $X \subseteq V$, 
we define $\mathcal{C}^{\ast}(X)$ as the set of non-trivial 
connected components 
of $G - X$.

Let $T$ be an elimination tree of $G$.
For each vertex $v \in V$, we define 
$T(v)$ as the vertex set of the subtree of $T$ rooted at $v$. 
Then, we define the vector $x^T \in \mathbb{R}^V$ by choosing the coordinate $x^T(v)$ at every vertex of $v$
from the leaves to the root according to the following rule. 
\begin{itemize}
\item
If $v$ is a leaf of $T$, 
then we define $x^T(v) := 0$. 
\item
If $v$ is not a leaf of $T$, 
then we define $x^T(v)$ so that 
\[
\sum_{u \in T(v)}x^T(u) = 3^{|T(v)| - 2}. 
\] 
\end{itemize}
Define 
$\mathcal{E} := \{x^T \in \mathbb{R}^V \mid \text{$T$ is an elimination tree of $G$}\}$. 

\begin{lemma}[Devadoss~\cite{DBLP:journals/dm/Devadoss09}] \label{lemma:D09}
The convex hull of 
$\mathcal{E}$ is a realization of 
the $G$-associahedron.
For each elimination tree $T$ of $G$, 
the point $x^T$ is an extreme point of
the $G$-associahedron.
\end{lemma}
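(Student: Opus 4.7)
The plan is to verify that $\mathrm{conv}(\mathcal{E})$ realises the $G$-associahedron by matching it with the standard combinatorial description via tubings. Recall that a \emph{tube} of $G$ is a non-empty proper subset $U\subsetneq V$ such that $G[U]$ is connected, and a \emph{tubing} is a collection of tubes that are pairwise either nested or both vertex-disjoint and non-adjacent; the $G$-associahedron is the simple $(n-1)$-dimensional polytope whose non-empty faces correspond to tubings and whose vertices correspond to maximal tubings. A classical bijection identifies elimination trees $T$ of $G$ with maximal tubings via $T \mapsto \tau(T) := \{T(v) : v\in V\setminus\{\mathrm{root}(T)\}\}$.

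The first step is to introduce the candidate polytope $P \subseteq \mathbb{R}^V$ defined by
\[
\sum_{v\in V}x(v) = 3^{n-2},\qquad x(v)\ge 0\ (v\in V),\qquad \sum_{v\in U}x(v) \ge 3^{|U|-2}\ \text{for every non-trivial tube } U.
\]
I would then verify that every $x^T \in \mathcal{E}$ lies in $P$, with each inequality active exactly on the facets indexed by tubes in $\tau(T)$: the non-negativity $x^T(v)=0$ is tight precisely at leaves of $T$ (the trivial tubes in $\tau(T)$), and $\sum_{v\in U}x^T(v)=3^{|U|-2}$ holds precisely when $U=T(v)$ for some non-leaf $v$. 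The non-negativity claim follows by induction on the subtree rooted at $v$ using the recursive definition of $x^T$; the tube lower bound is established by induction on $|V|$, distinguishing whether $U$ lies inside a single subtree below the root of $T$ (reduce to a smaller graph associahedron) or crosses several of them (where connectivity of $G[U]$ forces $U$ to contain the root of $T$, and an elementary superadditivity estimate of the form $3^{a+b-2}\ge 3^{a-2}+3^{b-2}$ closes the case).

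With the facet inequalities in hand, for every elimination tree $T$ the vector $x^T$ satisfies exactly $n-1$ of the defining inequalities with equality, one per tube in $\tau(T)$. The corresponding facet normals are linearly independent thanks to the laminar compatibility structure of $\tau(T)$, so together with the ambient hyperplane we obtain $n$ independent active constraints, certifying $x^T$ as a vertex of $P$. Matching the face lattice --- the face associated with a tubing $\tau$ is the intersection of $P$ with the facets indexed by tubes of $\tau$, whose vertex set equals $\{x^T : \tau(T)\supseteq \tau\}$ --- then yields $P=\mathrm{conv}(\mathcal{E})$ together with a combinatorial isomorphism with the $G$-associahedron; in particular every $x^T$ is an extreme point.

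The main obstacle is establishing the tube lower bound $\sum_{v\in U}x^T(v)\ge 3^{|U|-2}$ for tubes $U$ that are not of the form $T(v)$: because the recursive definition of $x^T$ can concentrate weight at internal nodes, the induction must delicately exploit the connectedness of $G[U]$ and how $U$ meets the subtrees rooted at the children of the root of $T$. Once this estimate is in place, the remaining steps --- linear independence of the tubing-indexed facet normals and identification of the face lattice --- are routine polytope computations.
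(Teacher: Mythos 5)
The paper never proves this lemma: it is imported verbatim from Devadoss~\cite{DBLP:journals/dm/Devadoss09} and used as a black box. The closest the paper comes is Section~\ref{sec:polymatroid}, where Lemmas~\ref{lemma:monotone}--\ref{lemma3:polymatroid} show that $\mathrm{conv}(\mathcal{E})$ equals the base polytope $\mathbf{B}(f)$ of an explicit polymatroid. Your plan is therefore a genuinely independent route, and its first half is sound: your tube inequalities coincide with the non-redundant inequalities defining $\mathbf{B}(f)$ (substitute $X=V\setminus U$ in \eqref{eq:definition}; disconnected complements give only redundant constraints), your tube lower bound is exactly the assertion $x^T\in\mathbf{B}(f)$, and the induction you sketch does go through --- if the tube $U$ avoids the root $r$ of $T$ it sits inside one subtree and you recurse, while if $r\in U$ one gets the strict bound $\sum_{v\in U}x^T(v)\ge x^T(r)\ge 3^{n-2}-3^{n-3}>3^{|U|-2}$ using precisely the superadditivity inequality that the paper isolates as Lemma~\ref{lemma:monotone}.

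The genuine gap is in your last step. Verifying that each $x^T$ satisfies $n-1$ linearly independent constraints of $P$ with equality shows that each $x^T$ is a vertex of $P$; it does not show that $P$ has no \emph{other} vertices, nor that the face lattice of $P$ is the tubing complex. Your sentence describing the face of a tubing $\tau$ as having ``vertex set equal to $\{x^T:\tau(T)\supseteq\tau\}$'' assumes exactly what must be proved, so the conclusion $P=\mathrm{conv}(\mathcal{E})$ is circular as written. To close it you need one of the following: (a) the greedy characterization of extreme points of a polymatroid base polytope --- every extreme point is generated by an ordering of $V$, and every ordering generates $x^T$ for the compatible elimination tree $T$ --- which is how the paper's Lemmas~\ref{lemma2:polymatroid} and~\ref{lemma3:polymatroid} dispose of this issue; (b) a direct argument that every linear functional on $P$ is maximized at some $x^T$; or (c) a connectivity argument: each $x^T$ is a simple vertex, relaxing any one of its $n-1$ tight constraints traces an edge ending at $x^{T'}$ for a swap-neighbour $T'$, so $\mathcal{E}$ is closed under the edges of $P$ and, since the graph of a polytope is connected, exhausts its vertex set. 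Route (c) is the one that also delivers the isomorphism of $1$-skeletons that the lemma asserts; your current write-up establishes neither the equality $P=\mathrm{conv}(\mathcal{E})$ nor that isomorphism.
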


Define the function $f \colon 2^V \to \mathbb{R}$ by
\begin{equation} \label{eq:definition}
f(X) := 
\displaystyle{3^{n-2} - \sum_{C \in \mathcal{C}^{\ast}(X)}3^{|C|-2}}
\end{equation}
for each subset $X \subseteq V$. 
Notice that since $n \ge 2$ and 
$G$ is connected, 
$f(\emptyset)=0$. 
It is not difficult to see that 
we can evaluate the values of the function $f$ in time bounded by 
a polynomial in the size of $G$. 

\begin{lemma} \label{lemma:monotone}
Let $a_1,a_2,\dots,a_k$ be positive integers. Then
\begin{equation*}
3^{a_1+a_2+\dots+a_k} \ge 3^{a_1} + 3^{a_2} + \dots + 3^{a_k}. 
\end{equation*}
\end{lemma}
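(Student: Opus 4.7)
The plan is to prove the inequality by a straightforward induction on $k$. The base case $k = 1$ is trivial, since the inequality degenerates to $3^{a_1} \ge 3^{a_1}$.

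For the inductive step, I would assume the claim holds for $k-1$ positive integers, set $S := 3^{a_1} + 3^{a_2} + \dots + 3^{a_{k-1}}$, and try to establish
\[
3^{a_1+a_2+\dots+a_k} = 3^{a_k} \cdot 3^{a_1+a_2+\dots+a_{k-1}} \ge S + 3^{a_k}.
\]
By the induction hypothesis, $3^{a_1+a_2+\dots+a_{k-1}} \ge S$, so it suffices to show $3^{a_k} \cdot S \ge S + 3^{a_k}$, equivalently $(3^{a_k} - 1) S \ge 3^{a_k}$. Since each $a_i$ is a positive integer, we have $3^{a_k} \ge 3$ and $S \ge 3^{a_1} \ge 3$ (for $k \ge 2$). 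Hence $(3^{a_k} - 1) S \ge (3^{a_k} - 1) \cdot 3 \ge 3^{a_k}$, where the last step uses $3(3^{a_k} - 1) - 3^{a_k} = 2 \cdot 3^{a_k} - 3 \ge 0$. This completes the induction.

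An alternative route I might take, if a single-shot argument is preferred, is to let $a^* := \max_i a_i$ and note that $\sum_{i=1}^k 3^{a_i} \le k \cdot 3^{a^*}$, while $3^{\sum a_i} \ge 3^{a^* + (k-1)} = 3^{k-1} \cdot 3^{a^*}$ because the remaining $k-1$ exponents each contribute at least $1$. The inequality then reduces to $3^{k-1} \ge k$, which is immediate by another one-line induction. There is no real obstacle here; the only point requiring a moment of care is checking that the slack $3^{a_k}-1 \ge \tfrac{2}{3} \cdot 3^{a_k}$ (or equivalently $3^{k-1}\ge k$) is enough to absorb the additive term, which is true precisely because the base $3$ exceeds $2$.
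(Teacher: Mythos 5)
Your induction on $k$ is correct and is essentially the argument the paper gives: both proofs factor $3^{a_1+\dots+a_k}=3^{a_k}\cdot 3^{a_1+\dots+a_{k-1}}$ and apply the induction hypothesis, with the paper absorbing the extra additive term by strengthening the claim to a slack of $3$ for $k\ge 2$, while you absorb it via the equivalent observation $(3^{a_k}-1)S\ge 3^{a_k}$ using $S\ge 3$. No gap; your alternative max-exponent argument is also valid but unnecessary.
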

\begin{proof}
When $k = 1$, this statement holds by definition.

In what follows, we prove that for every integer $k \ge 2$,
\begin{equation} \label{eq:integer}
3^{a_1+a_2+\dots+a_k} - (3^{a_1} + 3^{a_2} + \dots + 3^{a_k}) \ge 3. 
\end{equation}
First, we consider the case where $k = 2$. In this case, 
since $a_1,a_2 \ge 1$, we have 
\[
3^{a_1 + a_2} - (3^{a_1} + 3^{a_2})
= 
(3^{a_1} - 1)(3^{a_2} - 1) - 1
\ge 3. 
\]
Let $\ell \geq 2$ be a positive integer, and assume that \eqref{eq:integer} holds when $k = \ell$. 
Then, we consider the case where $k = \ell + 1$. 
Since $a_1,a_2,\dots,a_{\ell+1}$ are positive integers, we have 
\begin{equation*}
\begin{split}
& 3^{a_1+a_2+\dots+a_{\ell+1}} - (3^{a_1} + 3^{a_2} + \dots + 3^{a_{\ell+1}})\\
& = 3^{a_{\ell+1}} \cdot 3^{a_1+a_2+\dots+a_{\ell}} 
- (3^{a_1} + 3^{a_2} + \dots + 3^{a_{\ell+1}})\\
& \ge 3^{a_{\ell+1}} (3^{a_1} + 3^{a_2} + \dots + 3^{a_{\ell}} + 3) 
- (3^{a_1} + 3^{a_2} + \dots + 3^{a_{\ell+1}})\\
& = (3^{a_1 + a_{\ell+1}} + 3^{a_2 + a_{\ell+1}} + \dots + 3^{a_{\ell} + a_{\ell+1}} 
+ 3^{a_{\ell+1}+1}) 
- (3^{a_1} + 3^{a_2} + \dots + 3^{a_{\ell+1}})\\
& \ge 3^{a_{\ell+1}}(3 - 1) \ge 3, 
\end{split}
\end{equation*}
where the first inequality follows from the induction hypothesis. 
This completes the proof. 
\end{proof} 

\begin{lemma} \label{lemma1:polymatroid}
The function $f$ defined in \eqref{eq:definition} 
satisfies \textrm{(P1)}, \textrm{(P2)}, and \textrm{(P3)}.
\end{lemma}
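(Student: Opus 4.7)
The plan is to verify the three polymatroid axioms separately. Axiom~(P1) is immediate: since $G - \emptyset = G$ is connected and has $n \ge 2$ vertices, $\mathcal{C}^{\ast}(\emptyset) = \{V\}$, so $f(\emptyset) = 3^{n-2} - 3^{n-2} = 0$. Both (P2) and (P3) will rest on a single technical bound, which I will prove first: for any graph $H$ and any $X \subseteq V(H)$,
\[
 \sum_{C \in \mathcal{C}^{\ast}_{H}(X)} 3^{|C|-2} \le 3^{|V(H)| - |X| - 2},
\]
where $\mathcal{C}^{\ast}_{H}(X)$ denotes the set of non-trivial components of $H - X$. This follows directly from Lemma~\ref{lemma:monotone}: letting $C_1, \dots, C_k$ be the non-trivial components of $H - X$ (the case $k = 0$ being trivial), set $a_i := |C_i| - 1 \ge 1$; the lemma yields $3^{\sum (|C_i| - 1)} \ge \sum 3^{|C_i| - 1}$, and dividing by $3$ together with $\sum |C_i| \le |V(H)| - |X|$ and $k \ge 1$ gives the displayed inequality.

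For (P2), the plan is to apply the key bound component by component. Given $X \subseteq Y$, every $C' \in \mathcal{C}^{\ast}(Y)$ is contained in a unique component of $G-X$, which must be non-trivial since $|C'| \ge 2$. The components of $G - Y$ contained in a given $C \in \mathcal{C}^{\ast}(X)$ are exactly the components of $G[C] - (Y \cap C)$, so the key bound applied to $G[C]$ with subset $Y \cap C$ gives $\sum_{C' \in \mathcal{C}^{\ast}(Y),\, C' \subseteq C} 3^{|C'| - 2} \le 3^{|C| - |Y \cap C| - 2} \le 3^{|C| - 2}$. Summing over $C \in \mathcal{C}^{\ast}(X)$ yields $\sum_{C' \in \mathcal{C}^{\ast}(Y)} 3^{|C'| - 2} \le \sum_{C \in \mathcal{C}^{\ast}(X)} 3^{|C| - 2}$, i.e., $f(X) \le f(Y)$.

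For (P3), I will invoke the standard equivalence between submodularity and its local version: it suffices to prove $f(X) + f(X \cup \{u, v\}) \le f(X \cup \{u\}) + f(X \cup \{v\})$ for every $X \subseteq V$ and every pair of distinct $u, v \in V \setminus X$. I will split into two cases by whether $u$ and $v$ lie in the same connected component of $G - X$. If they lie in different components (including the degenerate case when one or both of $u, v$ are isolated), the change in the component structure caused by deleting $u$ is independent of the one caused by deleting $v$, and the inequality holds with equality. If they lie in the same (necessarily non-trivial) component $C$, the contributions from all other components cancel, and the inequality reduces to $h_C(\{u\}) + h_C(\{v\}) \le 3^{|C| - 2} + h_C(\{u, v\})$, where $h_C(X') := \sum_{D \in \mathcal{C}^{\ast}_{G[C]}(X')} 3^{|D| - 2}$. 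Applying the key bound inside $G[C]$ gives $h_C(\{u\}), h_C(\{v\}) \le 3^{|C| - 3}$, so the left-hand side is at most $2 \cdot 3^{|C| - 3} = (2/3) \cdot 3^{|C| - 2} < 3^{|C| - 2} \le 3^{|C| - 2} + h_C(\{u, v\})$.

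The main obstacle will be the bookkeeping in the case split for (P3), in particular the corner case $|C| = 2$, where the key bound yields $h_C(\{u\}) \le 1/3$ while the actual value is $0$; the argument still works because only an upper bound on $h_C(\{u\}) + h_C(\{v\})$ is needed. Once the key bound is secured, no further nontrivial work is required.
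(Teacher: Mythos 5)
Your proof is correct and follows essentially the same route as the paper's: both rest on Lemma~\ref{lemma:monotone} (which you merely repackage as the reusable bound $\sum_{C}3^{|C|-2}\le 3^{|V(H)|-|X|-2}$), reduce (P3) to the local form $f(X\cup\{u\})+f(X\cup\{v\})\ge f(X\cup\{u,v\})+f(X)$ with the same case split on whether $u$ and $v$ lie in the same component, and settle the same-component case by the identical estimate $2\cdot 3^{|C|-3}<3^{|C|-2}$. The only cosmetic difference is that you verify (P2) for arbitrary $X\subseteq Y$ component-by-component, whereas the paper adds one vertex at a time; the underlying inequality is the same.
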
 
Here, we recall the three properties:
(P1) $f(\emptyset) = 0$;
(P2) If $X \subseteq Y \subseteq V$, then $f(X) \le f(Y)$;
(P3) If $X, Y \subseteq V$, then $f(X \cup Y) + f(X \cap Y) \le f(X) + f(Y)$.

\begin{proof}
The property (P1) readily follows from the definition of $f$.
Thus, we consider (P2) and (P3). 

We first prove (P2). 
Let $X$ be a subset $V$, and let 
$v$ be a vertex in $V \setminus X$. 
It is sufficient to prove that 
$f(X) \le f(X \cup \{v\})$. 
Let $C_1,C_2,\dots,C_k$ be the non-trivial 
connected components 
of $G - X$. 
Then we have 
\begin{equation*}
f(X) = 3^{n-2} - \sum_{i = 1}^k3^{|C_i|-2}.
\end{equation*}
If $v \notin \bigcup_{i=1}^kC_i$, then $v$ is an isolated vertex of 
$G - X$. This implies that ${\cal C}^{\ast}(X) = {\cal C}^{\ast}(X \cup \{v\})$. 
Thus, $f(X) = f(X \cup \{v\})$. 
From here, we assume that $v \in \bigcup_{i=1}^kC_i$. 
Without loss of generality, we assume that 
$v \in C_k$. 
Then, $C_k \setminus \{v\}$ is partitioned into connected components
$C^{\prime}_1,C^{\prime}_2,\dots,C^{\prime}_{\ell}$ 
of $G - (X \cup \{v\})$. 
Assume that  
$C^{\prime}_1,C^{\prime}_2,\dots,C^{\prime}_t$ are non-trivial, and 
$C^{\prime}_{t+1},C^{\prime}_{t+2},\dots,C^{\prime}_{\ell}$ are trivial. 
Then, we have 
\[
f(X \cup \{v\}) = 3^{n-2} - \sum_{i = 1}^{k-1}3^{|C_i|-2} - 
\sum_{i = 1}^t3^{|C^{\prime}_i|-2}.
\]
Since $|C_k| > \sum_{i = 1}^t|C^{\prime}_i|$, 
we have 
\begin{equation*}
\begin{split}
f(X\cup \{v\}) - f(X) = 
3^{|C_k|-2} - \sum_{i = 1}^t3^{|C_i|-2}
\ge 
3^{\sum_{i = 1}^t|C^{\prime}_i|-2} - \sum_{i = 1}^t3^{|C^{\prime}_i|-2}
\ge 0,
\end{split}
\end{equation*}
where the first inequality follows from the monotonicity of the function 
$g(x) = 3^{x-2}$, and 
the second inequality follows from Lemma~\ref{lemma:monotone}. 

Next, we prove (P3). 
Let $X$ be a subset of $V$.
Let 
$u,v$ be distinct vertices in $V \setminus X$. 
Then it is sufficient to prove that 
\begin{equation*}
f(X\cup \{u\}) + f(X \cup \{v\}) \ge 
f(X \cup \{u,v\}) + f(X).
\end{equation*}
(See e.g., \cite[p.58]{F05}.)
Let $C_1,C_2,\dots,C_k$ be the non-trivial 
connected components 
of $G - X$. 
Then, we have 
\[
f(X) = 3^{n-2} - \sum_{i = 1}^{k} 3^{|C_i|-2}.
\]

First, we consider the case where $u \notin \bigcup_{i=1}^kC_i$. 
In this case, since $u$ is an isolated vertex in $G - X$, 
we have $f(X) = f(X\cup \{u\})$ 
and $f(X \cup \{v\}) = f(X \cup \{u,v\})$.
Similarly, we can treat the case where $v \notin \bigcup_{i=1}^kC_i$. 

Next, we consider the case where $u,v \in \bigcup_{i=1}^{k} C_i$,
and $u,v$ belong to distinct connected components 
of $G - X$. 
Without loss of generality, we assume that 
$u \in C_{k-1}$ and 
$v \in C_k$. 
\begin{itemize}
\item
Let $C^{\circ}_1,C^{\circ}_2,\dots,C^{\circ}_{\ell}$ be 
the non-trivial connected components of $G[C_{k-1}] - \{u\}$, where $G[C_{k-1}]$ denotes the subgraph of $G$ induced by $C_{k-1}$.
\item
Let $C^{\bullet}_1,C^{\bullet}_2,\dots,C^{\bullet}_{t}$ be 
the non-trivial connected components of $G[C_k] - \{v\}$.
\end{itemize}
Then, the non-trivial connected components of
$G[C_{k-1}\cup C_k] - \{u,v\}$ are 
$C^{\circ}_1, \dots, C^{\circ}_{\ell}, C^{\bullet}_1, \dots, C^{\bullet}_{t}$.
Thus, we have 
\[
f(X\cup \{u\}) + f(X \cup \{v\}) = 
f(X \cup \{u,v\}) + f(X).
\]

Finally, we consider the case where 
$u,v \in \bigcup_{i=1}^{k} C_i$,
and $u,v$ belong to the same connected component 
of $G - X$. 
Without loss of generality, we assume that 
$u,v \in C_k$. 
\begin{itemize}
\item
Let $C^{\circ}_1,C^{\circ}_2,\dots,C^{\circ}_{\ell}$ be 
the non-trivial connected components of $G[C_k] - \{u\}$. 
\item
Let $C^{\bullet}_1,C^{\bullet}_2,\dots,C^{\bullet}_{t}$ be 
the non-trivial connected components of $G[C_k] - \{v\}$.
\item
Let $C^{\prime}_1,C^{\prime}_2,\dots,C^{\prime}_{m}$ be 
the non-trivial connected components of
$G[C_k] - \{u,v\}$. 
\end{itemize}
Then,
since 
$\sum_{i=1}^{\ell}|C_i^{\circ}| \le |C_k|-1$ and
$\sum_{i=1}^t|C_i^{\bullet}| \le |C_k|-1$,
we have 
\begin{equation*}
\begin{split}
& f(X\cup \{u\}) + f(X \cup \{v\}) - 
f(X \cup \{u,v\}) - f(X)\\
& = 
{} - \sum_{i = 1}^{\ell}3^{|C^{\circ}_i|-2}
- \sum_{i = 1}^t3^{|C^{\bullet}_i|-2}
+ 3^{|C_k|-2}
+ \sum_{i = 1}^m3^{|C^{\prime}_i|-2}\\
& \ge 
{} - \sum_{i = 1}^{\ell}3^{|C^{\circ}_i|-2}
- \sum_{i = 1}^t3^{|C^{\bullet}_i|-2}
+ 3^{|C_k|-2}\\
& \ge 
{} - 3^{\sum_{i = 1}^{\ell}|C^{\circ}_i|-2}
- 3^{\sum_{i = 1}^t|C^{\bullet}_i|-2}
+ 3^{|C_k|-2}\\
& \ge 
{} - 3^{|C_k| - 3}
- 3^{|C_k| - 3}
+ 3^{|C_k|-2}
= 3^{|C_k|-3} \ge 0, 
\end{split}
\end{equation*}
where the second inequality follows from Lemma~\ref{lemma:monotone}.
This completes the proof. 
\end{proof} 

Let $(U, \rho)$ be a polymatroid.
It is well known that, for every vector $x \in \mathbb{R}^U$, 
$x$ is an extreme point of $\mathbf{B}(\rho)$ if and only if there exists an 
ordering $\sigma = (u_1,u_2,\dots,u_{|U|})$ of the elements in $U$ such that 
\[
x(u_i) = \rho(\{u_1,u_2,\dots,u_i\}) - \rho(\{u_1,u_2,\dots,u_{i-1}\})
\]
for every integer $i \in [|U|]$ (see \cite{E70,L83,S71}). 
In this case, we say that 
\emph{$x$ is generated by $\sigma$}. 

To prove that $x^T$ is an extreme point of $\mathbf{B}(f)$ for each elimination tree of $G$, we construct an ordering $\sigma$ on $V$ such that $x^T$ is generated by $\sigma$.
To this end, we say 
the elimination tree $T$ is \emph{compatible with}
an ordering $\sigma = (v_1,v_2,\dots,v_n)$ of the vertices 
in $V$ if
for every pair of integers $i,j \in [n]$ such that 
$i \neq j$, 
if $v_i$ is an ancestor of $v_j$ in $T$, then 
$i < j$.  
In other words, $T$ is compatible with $\sigma$ if $\sigma$ is a linear extension of $T$ when $T$ is seen as a partially ordered set.

\begin{lemma} \label{lemma2:polymatroid}
Let $\sigma = (v_1,v_2,\dots,v_n)$ be an ordering of the vertices in 
$V$.
Let $z$ be an extreme point of $\mathbf{B}(f)$ generated 
by $\sigma$.
In addition, let $T$ be an elimination tree of $G$ compatible 
with $\sigma$.
Then, $z = x^T$. 
\end{lemma}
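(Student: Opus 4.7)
The plan is to verify the equality coordinate by coordinate using the greedy characterization of extreme points of polymatroid base polytopes. Writing $X_i := \{v_1, v_2, \dots, v_i\}$ (with $X_0 = \emptyset$), the definition of ``generated by $\sigma$'' gives $z(v_i) = f(X_i) - f(X_{i-1})$, so the goal reduces to showing $x^T(v_i) = f(X_i) - f(X_{i-1})$ for every $i \in [n]$.

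First I would exploit the compatibility of $T$ with $\sigma$ to describe the connected components of $G - X_{i-1}$ purely in tree-theoretic terms. Compatibility implies that $X_{i-1}$ is upward closed in $T$ (if $v \in X_{i-1}$, then every ancestor of $v$ is in $X_{i-1}$), so $V \setminus X_{i-1}$ decomposes in $T$ as a disjoint union of subtrees $T(u_1), \dots, T(u_k)$ rooted at the maximal vertices $u_1, \dots, u_k$ of $V \setminus X_{i-1}$. The fundamental property of elimination trees, namely that the endpoints of every edge of $G$ are comparable in $T$, then forces each $T(u_j)$ to induce a connected subgraph of $G$ with no edges going to a different $T(u_{j'})$. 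Hence the connected components of $G - X_{i-1}$ are precisely $T(u_1), \dots, T(u_k)$, and so $\mathcal{C}^{\ast}(X_{i-1}) = \{T(u_j) : |T(u_j)| \ge 2\}$.

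Next I would observe that $v_i$ must itself be one of the $u_j$'s. Indeed, any ancestor of $v_i$ in $T$ precedes $v_i$ in $\sigma$ by compatibility, so it lies in $X_{i-1}$; consequently $v_i$ is maximal in $V \setminus X_{i-1}$ under the tree order. Say $v_i = u_1$. Passing from $X_{i-1}$ to $X_i$ therefore leaves the components $T(u_2), \dots, T(u_k)$ untouched and only affects $T(v_i)$, which breaks into the subtrees $T(c_1), \dots, T(c_m)$ rooted at the children $c_1, \dots, c_m$ of $v_i$ in $T$. Plugging this into the definition \eqref{eq:definition} of $f$ and cancelling the contributions from $T(u_2), \dots, T(u_k)$, I obtain
\begin{equation*}
f(X_i) - f(X_{i-1}) = \mathbf{1}[|T(v_i)| \ge 2] \cdot 3^{|T(v_i)|-2} - \sum_{l : |T(c_l)| \ge 2} 3^{|T(c_l)|-2}.
\end{equation*}

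Finally I would match this expression to $x^T(v_i)$ through the recursive definition. Splitting the identity $\sum_{u \in T(v_i)} x^T(u) = 3^{|T(v_i)|-2}$ (when $v_i$ is not a leaf) into the contribution of $v_i$ and those of the subtrees at its children, and using that $\sum_{u \in T(c_l)} x^T(u)$ equals $3^{|T(c_l)|-2}$ when $|T(c_l)| \ge 2$ and vanishes otherwise, gives exactly $x^T(v_i) = 3^{|T(v_i)|-2} - \sum_{l:|T(c_l)| \ge 2} 3^{|T(c_l)|-2}$; the leaf case is handled trivially since both sides vanish. Comparing with the previous display yields $z(v_i) = x^T(v_i)$ for every $i$, and the lemma follows. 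The only non-routine step is establishing that $\mathcal{C}^{\ast}(X_{i-1})$ coincides with the non-trivial subtrees $T(u_j)$; everything else is a bookkeeping computation.
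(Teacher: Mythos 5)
Your proof is correct and follows essentially the same route as the paper's: both verify $z(v_i)=x^T(v_i)$ coordinatewise via the greedy formula, identify the component of $G-\{v_1,\dots,v_{i-1}\}$ containing $v_i$ with the subtree $T(v_i)$ and its splitting under removal of $v_i$ with the subtrees at the children of $v_i$, and then match the telescoping difference of $f$ to the recursive definition of $x^T$. The only difference is presentational: you spell out the correspondence between components of $G-X_{i-1}$ and the maximal subtrees of $T$ avoiding $X_{i-1}$ (using upward-closedness and comparability of edge endpoints), which the paper's proof takes largely for granted.
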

\begin{proof}
Let $k$ be an integer in $[n]$.
We prove 
that $z(v_k) = x^T(v_k)$.
Let $C$ be the connected component of $G - \{v_1,v_2,\dots,v_{k-1}\}$
containing $v_k$.  
If $v_k$ is a leaf of $T$, then $|C|=1$. 
Thus, we have $z(v_k) = x^T(v_k) = 0$. 
Assume that $v_k$ is not a leaf of $T$.
Notice that, in this case, $|C| \ge 2$. 
Let $C_1,C_2,\dots,C_{\ell}$ be 
the connected components 
of $G[C] - \{v_{k}\}$.
Assume that 
$C_1, C_2, \dots, C_t$ are non-trivial, 
and 
$C_{t+1}, C_{t+2}, \dots, C_{\ell}$ are trivial. 
For each integer $i \in [\ell]\setminus [t]$, 
we assume that $C_i = \{u_i\}$. 
Let $c_1,c_2,\dots,c_{\ell}$ be the children of $v_k$ in $T$.
We assume that 
$c_i \in C_i$ for every integer $i \in [\ell]$. 
Then, for every integer $i \in [t]$, 
$T(c_i) = C_i$, which implies that 
\[
\sum_{u \in T(c_i)}x^T(u) = 3^{|C_i|-2}.
\]
We have 
\[
z(v_k) = f(\{v_1,v_2,\dots,v_k\}) - f(\{v_1,v_2,\dots,v_{k-1}\})
= 3^{|C|-2} - \sum_{i = 1}^t3^{|C_i|-2}.
\]
Furthermore, since 
$u_i$ is a leaf of $T$ for every integer $i \in [\ell]\setminus [t]$, 
\begin{equation*}
x^T(v_k) 
= 3^{|C|-2} - \sum_{i = 1}^{\ell}\sum_{u \in T(c_i)}x^T(u)
= 3^{|C|-2} - \sum_{i = 1}^t\sum_{u \in T(c_i)}x^T(u)
= 3^{|C|-2} - \sum_{i = 1}^t3^{|C_i|-2}. 
\end{equation*}
Thus, $z(v_k) = x^T(v_k)$. 
This completes the proof. 
\end{proof}

Recall that $\mathcal{E} = \{x^T \in \mathbb{R}^V \mid \text{$T$ is an elimination tree of $G$}\}$.

\begin{lemma} \label{lemma3:polymatroid}
The base polytope $\mathbf{B}(f)$ coincides with the convex hull of 
$\mathcal{E}$.
\end{lemma}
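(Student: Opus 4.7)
The plan is to use Lemma~\ref{lemma2:polymatroid} in both directions to establish that $\mathcal{E}$ is exactly the vertex set of $\mathbf{B}(f)$. Since $f$ is a polymatroid rank function by Lemma~\ref{lemma1:polymatroid}, $\mathbf{B}(f)$ is a bounded polytope, and hence it equals the convex hull of its extreme points. So it suffices to prove that the set of extreme points of $\mathbf{B}(f)$ equals $\mathcal{E}$.

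For the inclusion that every extreme point of $\mathbf{B}(f)$ lies in $\mathcal{E}$, I would take an arbitrary extreme point $z$ of $\mathbf{B}(f)$. By the standard characterization cited just before Lemma~\ref{lemma2:polymatroid}, there is an ordering $\sigma$ of $V$ generating $z$. As noted in the Preliminaries, any linear ordering of $V$ uniquely determines an elimination tree $T$ of $G$ for which ancestors precede descendants, i.e., $T$ is compatible with $\sigma$. Lemma~\ref{lemma2:polymatroid} then yields $z = x^T \in \mathcal{E}$.

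For the reverse inclusion, I would start with any elimination tree $T$ of $G$ and take any linear extension $\sigma$ of the partial order defined by $T$ (for instance, any topological ordering of $T$ viewed as a rooted tree where ancestors precede descendants). Such an extension exists and $T$ is compatible with $\sigma$ by construction. Let $z$ be the extreme point of $\mathbf{B}(f)$ generated by $\sigma$; this extreme point exists because $f$ is a polymatroid rank function. Then Lemma~\ref{lemma2:polymatroid} forces $z = x^T$, showing that $x^T$ is an extreme point of $\mathbf{B}(f)$, in particular $x^T \in \mathbf{B}(f)$.

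Combining the two directions, the set of extreme points of $\mathbf{B}(f)$ coincides with $\mathcal{E}$, and therefore $\mathbf{B}(f)$ equals the convex hull of $\mathcal{E}$. No step is genuinely difficult once Lemma~\ref{lemma2:polymatroid} is in hand; the only subtlety worth spelling out is the bijective use of the correspondence between linear orderings of $V$ and compatible elimination trees noted in the Preliminaries, which is what lets us apply Lemma~\ref{lemma2:polymatroid} in both directions.
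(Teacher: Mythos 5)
Your proposal is correct and follows essentially the same route as the paper: both directions apply Lemma~\ref{lemma2:polymatroid} via the correspondence between linear orderings of $V$ and compatible elimination trees, exactly as in the paper's own proof.
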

\begin{proof}
Let $T$ be an elimination tree of $G$, and 
let $\sigma$ be an ordering of the vertices in 
$V$ such that 
$T$ is compatible with $\sigma$.
Let $z$ be the extreme point of $\mathbf{B}(f)$ generated 
by $\sigma$. 
Then, Lemma~\ref{lemma2:polymatroid} implies that $z = x^T$. 

Let $z$ be an extreme point of $\mathbf{B}(f)$. 
Recall that there exists an ordering 
$\sigma = (v_1,v_2,\dots,v_n)$ of the vertices in $V$  
such that $z$ is generated by $\sigma$.
Define 
the elimination tree $T$ as that obtained by removing 
the vertices in $V$ according to the ordering 
$\sigma$. (Namely, we first remove $v_1$, then 
we remove $v_2$, and so on.)  
Then, $T$ is compatible with 
$\sigma$.
Thus, Lemma~\ref{lemma2:polymatroid} implies that $z = x^T$. 
\end{proof}

The following lemma immediately follows from 
Lemmas~\ref{lemma:D09} and \ref{lemma3:polymatroid}.

\begin{lemma} \label{main_lemma:polymatroid}
The base polytope $\mathbf{B}(f)$ is a realization of 
the $G$-associahedron.
\qed
\end{lemma}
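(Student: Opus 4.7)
The plan is to observe that this lemma is an immediate consequence of the two preceding results in the section, so the proof will amount to a one-line chain of equalities. First I would invoke Lemma~\ref{lemma3:polymatroid}, which asserts that $\mathbf{B}(f)$ equals the convex hull of $\mathcal{E} = \{x^T \mid T \text{ is an elimination tree of } G\}$. Then I would invoke the first part of Lemma~\ref{lemma:D09} (Devadoss), which tells us that the convex hull of $\mathcal{E}$ is a realization of the $G$-associahedron. Chaining these two identifications yields $\mathbf{B}(f) = \mathrm{conv}(\mathcal{E}) = $ realization of the $G$-associahedron, which is exactly the claim.

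There is essentially no obstacle here, since all the substantive work has already been carried out: Lemma~\ref{lemma1:polymatroid} establishes that $f$ is a polymatroid rank function (so $\mathbf{B}(f)$ is well-defined as a base polytope), Lemma~\ref{lemma2:polymatroid} matches extreme points generated by orderings with the vectors $x^T$, and Lemma~\ref{lemma3:polymatroid} bundles these into the set-theoretic equality between $\mathbf{B}(f)$ and $\mathrm{conv}(\mathcal{E})$. The only conceptual ingredient still needed is the geometric identification provided by Devadoss~\cite{DBLP:journals/dm/Devadoss09}, which is precisely Lemma~\ref{lemma:D09}.

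Accordingly, I expect the proof to consist of a single sentence (or two) that cites Lemmas~\ref{lemma:D09} and~\ref{lemma3:polymatroid} and concludes with the \qed. The only thing to be slightly careful about is to note that the extreme-point correspondence carries over: by Lemma~\ref{lemma3:polymatroid}, the extreme points of $\mathbf{B}(f)$ are exactly the vectors $x^T$ over elimination trees $T$ of $G$, which by Lemma~\ref{lemma:D09} are precisely the vertices of the realization of the $G$-associahedron. This observation will also be useful immediately afterward in the reduction, since it supplies the map $T \mapsto x^T$ needed to convert the input elimination trees $T_\ini, T_\tar$ into the extreme points $x_\ini, x_\tar$ required by \textsc{Combinatorial Shortest Path on Polymatroids}, and this map is clearly computable in polynomial time from the definition of $x^T$.
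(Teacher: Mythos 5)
Your proposal is correct and matches the paper exactly: the paper states that this lemma ``immediately follows from Lemmas~\ref{lemma:D09} and \ref{lemma3:polymatroid}'' and gives no further argument, which is precisely the one-line chaining you describe.
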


Thus, we complete the reduction from 
\textsc{Combinatorial Shortest Path on Graph Associahedra}
to 
\textsc{Combinatorial Shortest Path on Polymatroids}. 
Therefore, Theorem~\ref{thm:hardness-polymatroid} follows from Theorem~\ref{thm:hardness}. 

\section{Conclusion}

We prove that the combinatorial shortest path computation is hard on graph associahedra and base polytopes of polymatroids.
This evaporates our hope for resolving an open problem to obtain a polynomial-time algorithm for finding a shortest flip sequence between two triangulations of convex polygons and the rotation distance between two binary trees by generalizing the setting to graph associahedra. However, that open problem is still open, and we should pursue another way of attacking it.

\bibliography{graphassociahedron}

\end{document}